\theoremstyle{plain}
\newtheorem{thm}{Theorem}[section]
\newtheorem*{thm*}{Theorem}
\newtheorem{prop}{Proposition}[section]
\newtheorem*{prop*}{Proposition}
\newtheorem{cor}{Corollary}[section]
\newtheorem*{cor*}{Corollary}
\newtheorem{lem}{Lemma}[section]
\newtheorem*{lem*}{Lemma}
\theoremstyle{definition}
\newtheorem{defn}{Definition}[section]
\newtheorem*{defn*}{Definition}
\newtheorem{exmp}{Example}[section]
\newtheorem*{exmp*}{Example}
\newtheorem{exmps}[exmp]{Examples}
\newtheorem*{exmps*}{Examples}
\newtheorem{rem}{Remark}[section]
\newtheorem*{rem*}{Remark}
\newtheorem{rems}{Remarks}[section]
\newtheorem*{rems*}{Remarks}
\newtheorem*{note*}{Note}
\newcommand{\N}{{\mathbb N}}
\newcommand{\Z}{{\mathbb Z}}
\newcommand{\R}{{\mathbb R}}
\newcommand{\C}{{\mathbb C}}
\newcommand{\F}{{\mathbb F}}
\newcommand{\eps}{\varepsilon}
\renewcommand{\iff}{\: \Leftrightarrow\: }
\renewcommand{\bar}{\overline}
\numberwithin{equation}{section}
\DeclareMathOperator{\spa}{span}
\DeclareMathOperator{\orb}{orb}
\DeclareMathOperator{\Per}{Per} 
\begin{document}
\title[Hypercyclicity and linear chaos in a nonclassical sequence space]
{On hypercyclicity and linear chaos\\
in a nonclassical sequence space and beyond}
\author[Marat V. Markin]{Marat V. Markin}
\address{
Department of Mathematics\newline
California State University, Fresno\newline
5245 N. Backer Avenue, M/S PB 108\newline
Fresno, CA 93740-8001, USA
}
\email[corresponding author]{mmarkin@csufresno.edu}
\author{Eric Montoya}
\email{emontoya@mail.fresnostate.edu}
\subjclass{Primary 47A16, 47B37; Secondary 47A10}
\keywords{Hypercyclic vector, periodic point, hypercyclic operator, chaotic operator, spectrum}
\begin{abstract}
We analyze the hypercyclicity, chaoticity, and spectral structure of (bounded and unbounded) weighted backward shifts in a nonclassical sequence space, which the space $l_1$ of summable sequences is both isometrically isomorphic to and continuously and densely embedded into.

Based on the weighted backward shifts, we further construct new bounded and unbounded linear hypercyclic and chaotic operators both in the nonclassical sequence space and the classical space $l_1$, including those that are hypercyclic but not chaotic.
\end{abstract}
\maketitle

\section[Introduction]{Introduction}

We analyze the hypercyclicity, chaoticity, and spectral structure of the weighted backward shifts, \textit{bounded} 
\[
X\ni x:=\left(x_k\right)_{k\in \N}\mapsto A_wx:=w\left(x_{k+1}\right)_{k\in \N}\in X\quad (w\in \F)
\]
($\F:=\R$ or $\F:=\C$), as well as \textit{unbounded} 
\[
A_wx:=\left(w^kx_{k+1}\right)_{k\in \N} \quad (w\in \F,\ |w|>1)
\]
with maximal domain
\[
D(A_w):=\left\{ x:=\left(x_k\right)_{k\in \N} \in X \,\middle|\, \left(w^kx_{k+1}\right)_{k\in \N}\in X \right\},
\]
(for the first mention in the classical setting, see \cite{Rolewicz,arXiv:1811.06640}) in a nonclassical sequence space $X$ introduced in \cite{Grosse-Erdmann2000} (see also \cite{Grosse-Erdmann-Manguillot}), which the space $l_1$ of summable sequences is both isometrically isomorphic to and continuously and densely embedded into.

Based on the weighted backward shifts, we further construct new bounded and unbounded linear hypercyclic and chaotic operators both in the nonclassical sequence space $X$ and the classical space $l_1$, including those that are hypercyclic but not chaotic.

\section[Preliminaries]{Preliminaries}

The subsequent preliminaries are essential for our discourse.

\subsection{Certain Facts on Classical Sequence Spaces}\label{seccfcss}

\begin{defn}[Schauder Basis]\ \\
For a Banach space $(X,\|\cdot\|)$ over $\F$, a subset $\{e_n\}_{n\in \N}\subseteq X $ is called a \emph{Schauder basis} if
\[
\forall\, x\in X\ \exists!\, (c_k)_{k\in \N}\in \F^\N:\ x=\sum_{k=1}^\infty c_ke_k.
\]

The series is called the \emph{Schauder expansion} of $x$ and the numbers $c_k\in \F$, $k\in \N$, are called the \emph{coordinates} of $x$ relative to $\{e_n\}_{n\in \N}$ (see, e.g., \cite{Markin2018EFA,Markin2020EOT,MarkSogh2021}).
\end{defn}

Classical examples of such spaces are the spaces
\[
l_p:=\left\{ x:=\left(x_k\right)_{k\in \N} \in \F^\N\,\middle|\, \sum_{k=1}^\infty {|x_k|}^p<\infty \right\}
\]
of $p$-\textit{summable sequences} ($1\le p<\infty$) and the space 
\[
c_0:=\left\{ x:=\left(x_k\right)_{k\in \N} \in \F^\N\,\middle|\, \lim_{k\to\infty} x_k=0 \right\}
\]
of \textit{vanishing sequences}. 

For these spaces, the set  $\left\{e_n:=\left(\delta_{nk}\right)_{k\in \N}\right\}_{n\in \N}$, where 
\[
\delta_{nk}:=
\begin{cases}
1&\text{if}\ k=n,\\
0&\text{if}\ k\neq n,
\end{cases}
\]
is the \textit{Kronecker delta}, is the \textit{standard} Schauder basis and
\[
\forall\, x:=\left(x_k\right)_{k\in \N}\in X:\ x=\sum_{k=1}^\infty x_ke_k.
\]
($X:=l_p$ ($1\le p<\infty$) or $X:=c_0$).

For the space
\[
c:=\left\{ x:=\left(x_k\right)_{k\in \N} \in \F^\N\,\middle|\, \exists\,\lim_{k\to\infty} x_k\in \F \right\}
\]
of \textit{convergent sequences}, the \textit{standard} Schauder basis is
 $\left\{e_n:=\left(\delta_{nk}\right)_{k\in \N}\right\}_{n\in \Z_+}$, where
\[
e_0:=\left(1,1,1,\dots\right)\quad \text{and}\quad e_n:=\left(\delta_{nk}\right)_{k\in \N},\ n\in \N,
\]
and
\[
\forall\, x:=\left(x_k\right)_{k\in \N}\in c:\ x=\sum_{k=0}^\infty c_ke_k,
\]
where
\[
c_0=\lim_{m\to\infty}x_m,\ c_k=x_k-\lim_{m\to\infty}x_m,\ k\in \N.
\]

The space 
\[
l_\infty:=\left\{ x:=\left(x_k\right)_{k\in \N} \in \F^\N\,\middle|\, \sup_{k\in \N}|x_k|<\infty \right\}
\]
of \textit{bounded sequences} has no Schauder basis since it is not separable.

See, e.g. \cite{Markin2018EFA,Markin2020EOT,MarkSogh2021}.

\begin{rem}\label{c0clinf}
Observe that $c$ is a proper subspace of $l_\infty$ and $c_0$ is a proper subspace
(a \textit{hyperplane}) of $c$, i.e.,
\[
c_0\subset c\subset l_\infty,
\]
(see, e.g., \cite{Markin2018EFA,Markin2020EOT,arXiv:2203.02032}).
\end{rem}

Henceforth, make use of the following

\begin{thm}[General Characterization of Convergence {\cite[Theorem $1$]{MarkSogh2021}}]\label{GCC}\ \\
Let $(X,\|\cdot\|) $ be a Banach space with a Schauder basis $\left\{e_n\right\}_{n\in \N} $ and corresponding coordinate functionals $ c_n(\cdot)$, $n\in \N$.

For a sequence $(x_n)_{n\in \N}$ and a vector $x$ in $X$, 
\[
x_n\to x, \ n\to \infty,
\]
iff
\begin{enumerate}[label=(\arabic*)]
\item $\forall\, k\in \N: \ c_k(x_n)\to c_k(x)$, $n\to \infty$, and
\item $\displaystyle \forall\, \eps>0\ \exists\, K_0\in \N\ \forall\, K\ge K_0 \ \forall\, n\in \N:\ \left\|\sum_{k=K+1}^\infty c_k(x_n)e_k\right\|<\eps$.
\end{enumerate}
\end{thm}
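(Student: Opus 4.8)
The plan is to prove both implications using the standard machinery of Schauder bases in a Banach space: the continuity of the coordinate functionals $c_n$ and the uniform boundedness of the partial-sum projections. Throughout, I would write $S_Kx:=\sum_{k=1}^{K}c_k(x)e_k$ for the $K$th partial-sum operator and $R_Kx:=x-S_Kx=\sum_{k=K+1}^\infty c_k(x)e_k$ for the corresponding tail. The two facts to invoke at the outset are that each $c_k$ is a bounded linear functional and that $M:=\sup_{K\in\N}\|S_K\|<\infty$ (the basis constant), whence $\|R_K\|=\|I-S_K\|\le 1+M$ for every $K$; both are classical consequences of the uniform boundedness / open mapping theorems applied to a Schauder basis.

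For the forward implication, suppose $x_n\to x$. Condition (1) is immediate, since each $c_k$ is continuous, so $c_k(x_n)\to c_k(x)$. Condition (2) is the substantive part. By linearity $R_Kx_n=R_Kx+R_K(x_n-x)$, so that
\[
\|R_Kx_n\|\le\|R_Kx\|+(1+M)\|x_n-x\|.
\]
Given $\eps>0$, I would first handle the tail of the sequence: because $R_Kx\to0$ as $K\to\infty$ (the Schauder expansion of $x$ converges) and $x_n\to x$, pick $N$ with $\|x_n-x\|<\eps/(2(1+M))$ for $n>N$ and $K'$ with $\|R_Kx\|<\eps/2$ for $K\ge K'$, so $\|R_Kx_n\|<\eps$ for all $n>N$ and $K\ge K'$. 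The finitely many initial vectors $x_1,\dots,x_N$ are then dealt with individually: for each $i$ the convergence of its own expansion yields $K_i$ with $\|R_Kx_i\|<\eps$ for $K\ge K_i$. Taking $K_0:=\max\{K',K_1,\dots,K_N\}$ gives the required bound uniformly in $n$.

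For the reverse implication, suppose (1) and (2) hold, and fix $\eps>0$. I would use (2) to choose $K_0$ with $\|R_Kx_n\|<\eps/3$ for all $K\ge K_0$ and all $n$, simultaneously enlarging $K_0$ so that $\|R_{K_0}x\|<\eps/3$, which is possible since the expansion of $x$ converges. With this $K_0$ fixed, the decomposition
\[
\|x_n-x\|\le\|S_{K_0}x_n-S_{K_0}x\|+\|R_{K_0}x_n\|+\|R_{K_0}x\|
\]
reduces matters to the finite sum $S_{K_0}x_n-S_{K_0}x=\sum_{k=1}^{K_0}\bigl(c_k(x_n)-c_k(x)\bigr)e_k$, which by (1) tends to $0$ in norm as $n\to\infty$, being a finite linear combination. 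Choosing $N$ so that this partial-sum difference is below $\eps/3$ for $n>N$ gives $\|x_n-x\|<\eps$, hence $x_n\to x$.

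The main obstacle is condition (2) in the forward direction: the estimate must be uniform in $n$, and this cannot be extracted from $x_n\to x$ alone, since convergence only controls large $n$. The resolution is to separate the finitely many initial terms — each with its own convergent expansion contributing a threshold $K_i$ — from the tail of the sequence, where the uniform boundedness of the projections converts $\|x_n-x\|\to0$ into smallness of $\|R_Kx_n\|$ uniformly. Managing the interchange of the two universal quantifiers over $n$ and $K$ is exactly what this splitting accomplishes.
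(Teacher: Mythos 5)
Your proof is correct. Note that the paper under review does not prove this theorem at all---it is imported verbatim as \cite[Theorem~1]{MarkSogh2021}---but your argument, resting on the uniform boundedness of the partial-sum projections $S_K$ (hence of the tails $R_K=I-S_K$) and on splitting the quantifier over $n$ into the finitely many initial terms $x_1,\dots,x_N$ (each handled by the convergence of its own Schauder expansion) and the tail $n>N$ (where $\|R_Kx_n\|\le\|R_Kx\|+(1+M)\|x_n-x\|$ gives the uniform estimate), is the standard proof of this characterization and is essentially the one given in that reference.
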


\begin{rem}\label{remcdecss}
In the chain of the proper inclusions
\begin{equation*}
c_{00}\subset l_p\subset l_q \subset c_0,
\end{equation*}
where $1\le p<q<\infty$, (see, e.g., \cite{Markin2018EFA,Markin2020EOT}),
\begin{equation*}
l_p\hookrightarrow l_q \hookrightarrow c_0
\end{equation*}
are \textit{continuous} and \textit{dense} embeddings. 

In particular, the continuity of embeddings
\[
l_p\hookrightarrow l_q\quad (1\le p<q<\infty)
\] 
is implied by the \textit{Characterization of Convergence in $l_p$ ($1\le p<\infty$)} (see, e.g., \cite[Proposition $2.16$]{Markin2018EFA}, \cite[Proposition $2.17$]{Markin2020EOT}), which, in its turn, is a particular case of the prior general characterization \cite{MarkSogh2021}.
\end{rem}

\subsection{Spectrum}\

The spectrum $\sigma(A)$ of a closed linear operator $A$ in a complex Banach space $X$ is the union of the following pairwise disjoint sets:
\begin{equation*}
\begin{split}
& \sigma_p(A):=\left\{\lambda\in \C \,\middle|\,A-\lambda I\ \text{is \textit{not injective}, i.e., $\lambda$ is an \textit{eigenvalue} of $A$} \right\},\\
& \sigma_c(A):=\left\{\lambda\in \C \,\middle|\,A-\lambda I\ \text{is \textit{injective},
\textit{not surjective}, and $\overline{R(A-\lambda I)}=X$} \right\},\\
& \sigma_r(A):=\left\{\lambda\in \C \,\middle|\,A-\lambda I\ \text{is \textit{injective} and $\overline{R(A-\lambda I)}\neq X$} \right\}
\end{split}
\end{equation*}
($R(\cdot)$ is the \textit{range} of an operator and $\overline{\cdot}$ is the \textit{closure} of a set), called the \textit{point}, \textit{continuous}, and \textit{residual spectrum} of $A$, respectively (see, e.g., \cite{Dun-SchI,Markin2020EOT}).

\subsection{Hypercyclicity and Linear Chaos}\

\begin{defn}[Hypercyclic and Chaotic Linear Operators]\ \\ 
For a (bounded or unbounded) linear operator $A$ in a (real or complex) Banach space $X$, a nonzero vector 
\begin{equation*}
x\in C^\infty(A):=\bigcap_{n=0}^{\infty}D(A^n)
\end{equation*}
($D(\cdot)$ is the \textit{domain} of an operator, $A^0:=I$, $I$ is the \textit{identity operator} on $X$) is called \textit{hypercyclic} if its \textit{orbit} under $A$
\[
\orb(x,A):=\left\{A^nx\right\}_{n\in\Z_+}
\]
is dense in $X$, i.e.,
\[
\bar{\orb(x,A)}=X.
\]

Linear operators possessing hypercyclic vectors are said to be \textit{hypercyclic}.

If there exist an $N\in \N$ and a vector 
\[
x\in D\left(A^N\right)\quad \text{with}\quad A^Nx = x,
\]
such a vector is called a \textit{periodic point} for the operator $A$ of period $N$. If $x\ne 0$, we say that $N$ is a \textit{period} for $A$.
Hypercyclic linear operators with a dense in $X$ set $\Per(A)$ of periodic points, i.e.,
\[
\bar{\Per(A)}=X,
\]
are said to be \textit{chaotic}.
\end{defn}

See \cite{Devaney,Godefroy-Shapiro1991,B-Ch-S2001}.

\begin{exmps}\label{exmpshlc}\
\begin{enumerate}[label=\arabic*.]
    \item On the space $X:=l_p$ ($1\le p<\infty$) or $X:=c_0$, the classical Rolewicz weighted backward shifts
    $$ X \ni x:=\left(x_k\right)_{k\in \N} \mapsto A_wx:= w\left(x_{k+1}\right)_{k\in \N} \in X, $$
    where $w\in \F$ with $|w|>1$ are \textit{chaotic} \cite{Rolewicz,Godefroy-Shapiro1991}.
   \item On the nonclassical sequence space
   \[
X:=\left\{ \left(x_k\right)_{k\in \N}\in \F^\N\,\middle|\, 
\sum_{k=1}^\infty \left|\frac{x_{k+1}}{k+1}-\frac{x_{k}}{k}\right|<\infty\ \text{and}\ \lim_{k\to\infty}\frac{x_k}{k} = 0\right\},  
  \]
equipped with the norm
$$X \ni x:=\left(x_k\right)_{k\in \N}\mapsto \|x\| :=\sum_{k=1}^\infty 
\left| \frac{x_{k+1}}{k+1}-\frac{x_{k}}{k} \right|, $$
which takes the center stage in the subsequent discourse (see Section \ref{secncss}), the backward shift
    $$ X \ni x:=\left(x_k\right)_{k\in \N} \mapsto 
    Ax:= \left(x_{k+1}\right)_{k\in \N} \in X $$
    is \textit{hypercyclic} but \textit{not} densely periodic,
    and hence, \textit{not}  chaotic 
\cite{Grosse-Erdmann2000} (see also \cite[Exercise $4.1.3$]{Grosse-Erdmann-Manguillot}).
\item On an infinite-dimensional separable Banach space $(X,\|\cdot\|)$, 
the identity operator $I$ is densely periodic but \textit{not}
hypercyclic, and hence, \textit{not}  chaotic.
\end{enumerate}
\end{exmps}

\begin{samepage}
\begin{rems}\label{HCrems}\
\begin{itemize}
\item In the prior definition of hypercyclicity, the underlying space is necessarily
\textit{infinite-dimensional} and \textit{separable} (see, e.g., \cite{Grosse-Erdmann-Manguillot}).
\item For a hypercyclic linear operator $A$, the set $HC(A)$ of its hypercyclic vectors is necessarily dense in $X$, and hence, the more so, is the subspace $C^\infty(A)\supseteq HC(A)$.
\item Observe that
\[
\Per(A)=\bigcup_{N=1}^\infty \Per_N(A),
\]
where 
\[
\Per_N(A)=\ker(A^N-I),\ N\in \N
\]
is the \textit{subspace} of $N$-periodic points of $A$.
\item As immediately follows from the inclusions
\begin{equation*}
HC(A^n)\subseteq HC(A),\ \Per(A^n)\subseteq \Per(A), n\in \N,
\end{equation*}
if, for a linear operator $A$ in an infinite-dimensional separable Banach space $X$ and some $n\ge 2$, the operator $A^n$ is hypercyclic or chaotic, then $A$ is also hypercyclic or chaotic, respectively.
\end{itemize} 
\end{rems}
\end{samepage}

Prior to \cite{B-Ch-S2001,deL-E-G-E2003}, the notions of linear hypercyclicity and chaos had been studied exclusively for \textit{continuous} linear operators on Fr\'echet spaces, in particular for \textit{bounded} linear operators on Banach spaces (for a comprehensive survey, see \cite{Bayart-Matheron,Grosse-Erdmann-Manguillot}).

The following extension of \textit{Kitai's ctriterion} for bounded linear operators (see \cite{Kitai1982,Gethner-Shapiro1987}) is a useful shortcut for establishing hypercyclicity for (bounded or unbounded) linear operators without explicitly furnishing a hypercyclic vector as in \cite{Rolewicz}.

\begin{thm}[Sufficient Condition for Hypercyclicity {\cite[Theorem $2.1$]{B-Ch-S2001}}]\label{SCH}\ \\
Let $X$ be a (real or complex) infinite-dimensional separable Banach space and A be a densely defined linear operator in X such that each power $A^n$, $n \in \N$, is a closed operator. If there exists a set
\[
Y\subseteq C^\infty(A):=\bigcap_{n=1}^\infty D(A^n)
\]
dense in $X$ and a mapping $B:Y\to Y$ such that
\begin{enumerate}
\item $\forall\, x\in Y:\ ABx=x$ and
\item $\forall\, x\in  Y:\ A^n x, B^n x \to 0,\ n \to \infty,$ 
\end{enumerate}
then the operator $A$ is hypercyclic.
\end{thm}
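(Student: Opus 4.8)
The plan is to produce an \emph{explicit} hypercyclic vector for $A$ as the sum of a rapidly convergent series, imitating the classical proof of Kitai's criterion, while invoking the closedness of the powers $A^n$ to legitimize the one genuinely unbounded step. First I would fix a countable set $\{y_j\}_{j\in\N}\subseteq Y$ that is dense in $X$, which exists because $Y$ is dense in the separable space $X$ and is therefore itself separable; I would also fix an enumeration $(p_k)_{k\in\N}$ of $\N$ in which every index appears infinitely often. Iterating hypothesis (1) yields $A^nB^ny=y$ for every $y\in Y$ and $n\in\N$: this is legitimate since $B^ny\in Y\subseteq C^\infty(A)$, as $B$ maps $Y$ into $Y$. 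The target is the vector
\[
x=\sum_{k=1}^\infty B^{n_k}y_{p_k}
\]
for a strictly increasing sequence $(n_k)$ to be chosen so that $A^{n_k}x\approx y_{p_k}$; because $(p_k)$ visits each $j$ infinitely often and $\{y_j\}$ is dense, this will force $\bar{\orb(x,A)}=X$.

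Next I would select the $n_k$ inductively (with $n_0:=0$), imposing at the $k$-th step that (i) $n_k-n_{k-1}$ be large enough that $\|B^{m}y_{p_k}\|<2^{-k}$ for all $m\ge n_k-n_{k-1}$ (possible since $B^ny_{p_k}\to0$), and (ii) $\|A^{n_k-n_j}y_{p_j}\|<2^{-k}$ for all $1\le j<k$ (possible since $A^ny_{p_j}\to0$ and there are only finitely many such $j$). The key simplification, obtained from $A^nB^n=\mathrm{id}$ on $Y$, is that $A^{n_k}B^{n_i}y_{p_i}$ equals $A^{n_k-n_i}y_{p_i}$ for $i<k$, equals $y_{p_k}$ for $i=k$, and equals $B^{n_i-n_k}y_{p_i}$ for $i>k$. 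Condition (i) controls the ``future'' tail $\sum_{i>k}B^{n_i-n_k}y_{p_i}$ (it is absolutely summable, of norm $<2^{-k}$) and also guarantees convergence of the series defining $x$, while (ii) controls the ``past'' terms $\sum_{j<k}A^{n_k-n_j}y_{p_j}$; a routine estimate then bounds the image of the $k$-th partial sum by $\|A^{n_k}S_N-y_{p_k}\|<k\,2^{-k}$ for all $N\ge k$, up to the absolutely summable tail, so that (once the series manipulation is justified) $\|A^{n_k}x-y_{p_k}\|<k\,2^{-k}\to0$.

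The main obstacle is precisely that $A$ is \emph{unbounded}, so $A^{n_k}$ cannot be pushed through the infinite series term-by-term; this is exactly where the hypothesis that each $A^n$ is closed is indispensable. Writing $S_N:=\sum_{k=1}^N B^{n_k}y_{p_k}\in C^\infty(A)$, I have $S_N\to x$ in $X$, and the finite sums $A^{n_k}S_N$ converge as $N\to\infty$ (by the estimates above the tail is absolutely summable) to some vector $L_k$; closedness of $A^{n_k}$ then yields $x\in D(A^{n_k})$ with $A^{n_k}x=L_k$, whence $\|A^{n_k}x-y_{p_k}\|\to0$. Finally, since $x\in D(A^{n_k})$ for every $k$, $n_k\to\infty$, and $D(A^{m})\subseteq D(A^{n})$ for $n\le m$, it follows that $x\in C^\infty(A)$; the density argument of the first paragraph then shows that $\orb(x,A)$ is dense (in particular $x\neq0$), i.e., $x$ is hypercyclic, which completes the proof.
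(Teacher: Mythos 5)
The paper does not prove this statement: it is imported verbatim as a cited result (Theorem $2.1$ of B\`es--Chan--Seubert \cite{B-Ch-S2001}), so there is no in-paper argument to compare against. Your blind reconstruction is correct and is, in substance, the original proof of that theorem: the explicit series $x=\sum_k B^{n_k}y_{p_k}$ over a countable dense subset of $Y$ with each index recurring infinitely often, the telescoping identity $A^{n_k}B^{n_i}=A^{n_k-n_i}$ or $B^{n_i-n_k}$ on $Y$ obtained by iterating $AB=\mathrm{id}_Y$, the inductive choice of $(n_k)$ splitting the error into a ``past'' part controlled by $A^n y\to 0$ and a ``future'' tail controlled by $B^n y\to 0$, and---the one point specific to the unbounded setting---the use of closedness of each $A^{n_k}$ to conclude $x\in D(A^{n_k})$ and $A^{n_k}x=\lim_N A^{n_k}S_N$ rather than pushing $A^{n_k}$ through the series by continuity. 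All the small supporting facts you rely on check out: $B^{n_i}y\in Y\subseteq C^\infty(A)$ justifies composing powers; $C^\infty(A)$ is a linear subspace even though $Y$ need only be a set, so $S_N\in C^\infty(A)$; the nesting $D(A^m)\subseteq D(A^n)$ for $n\le m$ gives $x\in C^\infty(A)$ from $x\in D(A^{n_k})$ with $n_k\to\infty$; and density of $\{y_j\}$ together with the recurrence of each $j$ in $(p_k)$ yields $\overline{\orb(x,A)}=X$ and hence $x\ne 0$. I see no gap.
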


The subsequent newly established sufficient condition for linear chaos \cite{arXiv:2106.14872}, obtained via strengthening one of the hypotheses of the prior sufficient condition for hypercyclicity, serves as a shortcut for establishing chaoticity for (bounded or unbounded) linear operators without explicitly furnishing both a hypercyclic vector and a dense set of periodic points and is fundamental for our discourse.

\begin{thm}[Sufficient Condition for Linear Chaos {\cite[Theorem $3.2$]{arXiv:2106.14872}}]\label{SCC}\ \\
Let $(X,\|\cdot\|)$ be a  (real or complex) infinite-dimensional separable Banach space and $A$ be a densely defined linear operator in $X$ such that each power $A^{n}$, $n\in\N$, is a closed operator. If there exists a set
\[
Y\subseteq C^\infty(A):=\bigcap_{n=1}^\infty D(A^n)
\]
dense in $X$ and a mapping $B:Y\to Y$ such that
\begin{enumerate}
\item $\forall\, x\in Y:\ ABx=x$ and
\item $\forall\, x\in Y\  \exists\, \alpha=\alpha(x)\in (0,1),\ \exists\, c=c(x,\alpha)>0\ \forall\, n\in \N:$
\begin{equation*}
\max\left(\|A^nx\|,\|B^nx\|\right)\le c\alpha^n,
\end{equation*}
\end{enumerate}
then the operator $A$ is chaotic.
\end{thm}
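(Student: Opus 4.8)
The plan is to establish the two defining properties of chaos separately: hypercyclicity and density of the set $\Per(A)$ of periodic points. Hypercyclicity is immediate, since hypothesis (2) of the present statement is strictly stronger than hypothesis (2) of Theorem \ref{SCH}: the bound $\max(\|A^nx\|,\|B^nx\|)\le c\alpha^n$ with $\alpha\in(0,1)$ forces $A^nx\to 0$ and $B^nx\to 0$ as $n\to\infty$ for every $x\in Y$. Thus Theorem \ref{SCH} applies verbatim and $A$ is hypercyclic. All the real work lies in showing $\overline{\Per(A)}=X$, and here the \emph{geometric} decay furnished by hypothesis (2)---rather than mere convergence to $0$---is exactly what is needed.

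First I would record the algebraic identity $A^kB^kx=x$ for all $x\in Y$ and all $k\in\N$, proved by a short induction on $k$ from the base case $ABx=x$ (using that $B$ maps $Y$ into $Y$ and that $Y\subseteq C^\infty(A)$, so all the requisite domains are available). Then, fixing $x\in Y$ and $N\in\N$, I would propose the candidate periodic point
\[
p:=\sum_{j=0}^{\infty}A^{jN}x+\sum_{j=1}^{\infty}B^{jN}x.
\]
The series converges absolutely in $X$: by hypothesis (2), $\|A^{jN}x\|\le c\alpha^{jN}$ and $\|B^{jN}x\|\le c\alpha^{jN}$, and since $\alpha^N<1$ both sums are dominated by the convergent geometric series $c\sum_j(\alpha^N)^j$. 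A direct telescoping computation then indicates that applying $A^N$ shifts each summand by one index and reproduces $p$ itself.

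To turn this formal computation into a rigorous one for the possibly unbounded operator $A^N$, I would invoke its closedness. Writing $p_m:=\sum_{j=0}^{m}A^{jN}x+\sum_{j=1}^{m}B^{jN}x\in D(A^N)$ (a finite sum of vectors in $C^\infty(A)$), one has $p_m\to p$, while, using $A^{k}B^{k}=I$ on $Y$,
\[
A^Np_m=\sum_{i=1}^{m+1}A^{iN}x+\sum_{i=0}^{m-1}B^{iN}x\longrightarrow \sum_{i=1}^{\infty}A^{iN}x+\sum_{i=0}^{\infty}B^{iN}x=p,\quad m\to\infty.
\]
Since $A^N$ is closed, it follows that $p\in D(A^N)$ and $A^Np=p$, i.e., $p\in\Per(A)$. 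Finally, the approximation estimate
\[
\|p-x\|=\left\|\sum_{j=1}^{\infty}\bigl(A^{jN}x+B^{jN}x\bigr)\right\|\le 2c\sum_{j=1}^{\infty}\alpha^{jN}=\frac{2c\,\alpha^{N}}{1-\alpha^{N}}\to 0,\quad N\to\infty,
\]
shows that every $x\in Y$ is a limit of periodic points, whence $Y\subseteq\overline{\Per(A)}$ and, by density of $Y$, $\overline{\Per(A)}=X$. Together with hypercyclicity, this yields chaoticity.

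I expect the main obstacle to be the rigorous justification of applying the unbounded operator $A^N$ term-by-term to the infinite series defining $p$; this is precisely where the closedness hypothesis on each power $A^n$ is indispensable, and it is the reason the telescoping identity must be verified on partial sums rather than on the series directly. The strengthening of the decay hypothesis from ``$\to 0$'' to ``geometric'' plays a dual role worth highlighting: it guarantees absolute convergence of both defining series and simultaneously drives the tail $\|p-x\|$ to $0$ as the period $N$ grows, which is what converts a single periodic point into a dense supply of them.
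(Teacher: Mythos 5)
Your argument is correct and is essentially the standard proof of this result: hypercyclicity via Theorem \ref{SCH}, and dense periodicity via the closedness of $A^N$ applied to the partial sums of $p=\sum_{j=0}^{\infty}A^{jN}x+\sum_{j=1}^{\infty}B^{jN}x$, with the geometric decay both ensuring convergence and driving $\|p-x\|\to 0$ as $N\to\infty$. Note that the paper itself does not prove this theorem but cites it from \cite[Theorem 3.2]{arXiv:2106.14872}, whose proof proceeds along exactly these lines, so your proposal matches the intended argument.
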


For applications, see \cite{arXiv:2106.09682}.

We also need the following statements.

\begin{cor}[Chaoticity of Powers {\cite[Corollary $4.3$]{arXiv:2106.14872}}]\label{CP}\ \\
For a chaotic linear operator $A$ in a  (real or complex) infinite-dimensional separable Banach space subject to the \textit{Sufficient Condition for Linear Chaos} (Theorem \ref{SCC}), each power $A^n$, $n\in \N$, is chaotic.
\end{cor}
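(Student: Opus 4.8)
The plan is to show that for each fixed $m\in \N$ the power $A^m$ itself satisfies the Sufficient Condition for Linear Chaos (Theorem \ref{SCC}), so that the chaoticity of $A^m$ follows by a direct reapplication of that theorem, rather than by attempting to transport a dense orbit and a dense set of periodic points from $A$ to $A^m$. Concretely, I would keep the \emph{same} dense set $Y$ and replace the right inverse $B$ by its $m$th iterate $B^m$, then verify the two hypotheses of Theorem \ref{SCC} for the pair $\left(A^m, B^m\right)$.

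First I would record the structural prerequisites. Since the domains of the powers of a linear operator are nested, $D\left(A^{j+1}\right)\subseteq D\left(A^{j}\right)$ for all $j$, passing to the subsequence $\left\{A^{mk}\right\}_{k\in \N}$ gives $C^\infty\left(A^m\right)=\bigcap_{k=1}^\infty D\left(A^{mk}\right)=\bigcap_{j=1}^\infty D\left(A^{j}\right)=C^\infty(A)$; hence $Y\subseteq C^\infty(A)=C^\infty\left(A^m\right)$ is still a dense subset of $X$. The requirement that each power $\left(A^m\right)^k=A^{mk}$ be closed is immediate, this being a subfamily of the powers $A^n$, which are closed by hypothesis. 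Finally, since $B$ maps $Y$ into $Y$, so does $B^m$. Condition $(1)$ for $\left(A^m, B^m\right)$ is then obtained by iterating $ABx=x$: using $B^{m-j}x\in Y$, a routine induction yields $A^{j}B^m x=B^{m-j}x$ for $0\le j\le m$, and at $j=m$ this gives $A^m B^m x=x$ for every $x\in Y$, i.e., $B^m$ is a right inverse of $A^m$ on $Y$.

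For condition $(2)$, I would fix $x\in Y$ and take $\alpha=\alpha(x)\in(0,1)$ and $c=c(x,\alpha)>0$ as furnished by the hypothesis on $A$, so that $\max\left(\|A^nx\|,\|B^nx\|\right)\le c\alpha^n$ for all $n\in \N$. Specializing to the indices $n=mk$ and setting $\tilde\alpha:=\alpha^m\in(0,1)$ and $\tilde c:=c>0$, I obtain $\max\left(\left\|\left(A^m\right)^kx\right\|,\left\|\left(B^m\right)^kx\right\|\right)=\max\left(\left\|A^{mk}x\right\|,\left\|B^{mk}x\right\|\right)\le c\alpha^{mk}=\tilde c\,\tilde\alpha^{\,k}$ for all $k\in \N$. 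Thus $A^m$ meets the Sufficient Condition for Linear Chaos with the dense set $Y$ and the map $B^m$, and Theorem \ref{SCC} delivers the chaoticity of $A^m$.

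The argument is essentially a bookkeeping verification, so I do not anticipate a genuine obstacle; the one point demanding a little care is the identification $C^\infty\left(A^m\right)=C^\infty(A)$, which rests on the nesting of the domains of the powers and is what guarantees that $Y$ is an admissible dense set for $A^m$. It bears emphasizing that this short route is available \emph{precisely because} $A$ is assumed to obey the geometric decay estimate of Theorem \ref{SCC}: passing to a power merely thins the exponent $n$ to $mk$ and raises the base to $\alpha^m$, and both operations manifestly preserve the hypotheses. Absent such a quantitative assumption, the chaoticity of powers would be a considerably more delicate matter.
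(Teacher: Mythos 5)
Your argument is correct and is exactly the intended one: the paper does not reprove this corollary but imports it from \cite{arXiv:2106.14872}, where the proof likewise consists in verifying that the pair $\left(A^m,B^m\right)$ with the \emph{same} dense set $Y$ satisfies the hypotheses of Theorem \ref{SCC}, via $C^\infty\left(A^m\right)=C^\infty(A)$ (nesting of the domains $D\left(A^{j}\right)$), the iterated identity $A^mB^mx=x$ on $Y$, and the specialized estimate $\max\left(\left\|A^{mk}x\right\|,\left\|B^{mk}x\right\|\right)\le c\left(\alpha^m\right)^k$. The only point you pass over silently is the routine identification of $\left(A^m\right)^k$ with $A^{mk}$ as operators with equal (maximal) domains, which follows by induction from the standard definition $D\left(A^{n+1}\right)=\left\{x\in D\left(A^n\right)\mid A^nx\in D(A)\right\}$; this is bookkeeping, not a gap.
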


\begin{thm}[Kitai {\cite[Theorem $5.6$]{Grosse-Erdmann-Manguillot}}]\label{KT}\ \\
For a bounded hypercyclic operator $A$ on a complex infinite-dimensional separable Banach space, every connected component of its spectrum $\sigma(A)$ meets the unit circle.
\end{thm}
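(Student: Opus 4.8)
The plan is to argue by contradiction via the Riesz functional calculus. Suppose some connected component $\Omega$ of the (nonempty, compact) spectrum $\sigma(A)$ is disjoint from the unit circle $\{\lambda\in\C : |\lambda|=1\}$. Since $\Omega$ is connected and misses the circle, it lies entirely in one of the two disjoint open sets $U_-:=\{|\lambda|<1\}$ or $U_+:=\{|\lambda|>1\}$. The first key step is to upgrade this into a genuine spectral decomposition: I would use the fact that in a compact Hausdorff space the connected component of a point coincides with its quasi-component (the intersection of all relatively clopen sets containing it). Applying this to the component $\Omega$ of the compact set $\sigma(A)$, and exploiting the compactness of $\sigma(A)$ together with its disjointness from the circle, a finite intersection of relatively clopen sets produces a relatively clopen subset $\sigma_1\subseteq\sigma(A)$ with $\Omega\subseteq\sigma_1$ and $\sigma_1$ contained in whichever of $U_-$, $U_+$ contains $\Omega$. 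Writing $\sigma_2:=\sigma(A)\setminus\sigma_1$, the sets $\sigma_1,\sigma_2$ become disjoint compact spectral sets.

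Next I would invoke the Riesz decomposition associated with the clopen splitting $\sigma(A)=\sigma_1\cup\sigma_2$: the Riesz projection
\[
P:=\frac{1}{2\pi i}\oint_{\Gamma}(\lambda I-A)^{-1}\,d\lambda,
\]
where $\Gamma$ is a contour separating $\sigma_1$ from $\sigma_2$, is a bounded idempotent commuting with $A$ and yielding an $A$-invariant topological direct sum $X=M_1\oplus M_2$ with $M_i$ closed, $M_1=PX$, $M_2=(I-P)X$, and $\sigma(A_i)=\sigma_i$ for $A_i:=A|_{M_i}$. The goal is to show $M_1=\{0\}$, which forces $\sigma_1=\sigma(A_1)=\emptyset$ and contradicts $\emptyset\ne\Omega\subseteq\sigma_1$.

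To derive the contradiction I fix a hypercyclic vector $x=x_1+x_2$ (with $x_i\in M_i$) and push its dense orbit through the continuous surjection $P\colon X\to M_1$; since $PA^nx=A_1^nx_1$ and $P$ carries dense sets onto dense subsets of $M_1$, the sequence $\{A_1^nx_1\}_{n\ge0}$ must be dense in $M_1$. If $\Omega\subseteq U_-$, then the spectral radius of $A_1$ is less than $1$, so $\|A_1^n\|\to 0$ by the spectral radius formula, whence $A_1^nx_1\to 0$; a sequence converging to $0$ has relatively compact closure and cannot be dense in a nontrivial Banach space, forcing $M_1=\{0\}$. If instead $\Omega\subseteq U_+$, then $A_1$ is invertible with $\|A_1^{-n}\|\to 0$; for $x_1\ne0$ the estimate $\|x_1\|\le\|A_1^{-n}\|\,\|A_1^nx_1\|$ gives $\|A_1^nx_1\|\to\infty$, so only finitely many orbit points lie in any bounded set and density again fails, while $x_1=0$ would confine the whole orbit to the proper closed subspace $M_2$, contradicting its density in $X$; either way $M_1=\{0\}$.

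In both cases $M_1=\{0\}$, hence $\sigma_1=\emptyset$, contradicting $\Omega\subseteq\sigma_1$ with $\Omega\ne\emptyset$, which completes the argument. I expect the main obstacle to be the first step, namely the passage from a mere connected component to a relatively clopen spectral set $\sigma_1$ on which the functional calculus can act, since this is exactly where the interplay between the topology of the compact spectrum (component versus quasi-component) and the construction of the Riesz projection must be handled with care; the spectral-radius estimates and the density argument that follow are routine.
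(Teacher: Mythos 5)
Your argument is correct, and it is essentially the standard proof of Kitai's theorem: pass from the component to a relatively clopen spectral set via the component--quasi-component coincidence in the compact set $\sigma(A)$, apply the Riesz decomposition, push the dense orbit through the Riesz projection, and rule out both spectral-radius cases. The paper itself offers no proof --- it quotes the result from \cite[Theorem~5.6]{Grosse-Erdmann-Manguillot} --- and your argument matches the one given in that reference.
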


\begin{prop}[{\cite[Proposition $5.7$]{Grosse-Erdmann-Manguillot}}]\label{P5.7}\ \\
For a bounded hypercyclic operator $A$ on a complex infinite-dimensional separable Banach space, its spectrum has no isolated points and its point spectrum $\sigma_p(A)$ contains infinitely many roots of unity.
\end{prop}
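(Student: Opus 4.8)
The plan is to prove the two assertions together, deducing the absence of isolated spectral points from the statement about the point spectrum. Write $A$ for the given bounded hypercyclic operator on the complex, infinite-dimensional, separable Banach space $X$. The engine of the argument is a reduction lemma: if $P$ is a bounded idempotent commuting with $A$, so that $X=M\oplus N$ with $M=PX$ and $N=(I-P)X$ closed and $A$-invariant, then the operator induced by $A$ on the quotient $X/N$ is intertwined with $A|_{M}$ by the natural isomorphism $X/N\cong M$; since the quotient map is continuous with dense range, it carries a dense orbit to a dense orbit and each periodic point to a periodic point, so $A|_{M}$ inherits both hypercyclicity and density of periodic points from $A$. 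In particular, because hypercyclicity forces the ambient space to be infinite-dimensional (Remarks \ref{HCrems}), every such $M\neq\{0\}$ is infinite-dimensional.

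First I would show that $\sigma_p(A)$ contains infinitely many roots of unity. A root of unity $\zeta$ with $\zeta^{N}=1$ lies in $\sigma_p(A)$ exactly when $\ker(A-\zeta I)\neq\{0\}$, and any such eigenvector is an $N$-periodic point; conversely, since $z^{N}-1$ has distinct roots, every nonzero vector of $\Per(A)=\bigcup_{N}\ker(A^{N}-I)$ splits into eigenvectors whose eigenvalues are roots of unity. Suppose toward a contradiction that only finitely many roots of unity $\mu_{1},\dots,\mu_{k}$ were eigenvalues of $A$. With $p(z):=\prod_{i=1}^{k}(z-\mu_{i})$, the Lagrange idempotents $e_{i}(A)$, where $e_{i}(z)=\prod_{j\neq i}(z-\mu_{j})/(\mu_{i}-\mu_{j})$, give $\ker p(A)=\bigoplus_{i=1}^{k}\ker(A-\mu_{i}I)$, a closed subspace containing all of $\Per(A)$. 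Density of $\Per(A)$ would then force $\ker p(A)=X$, whence $X=\bigoplus_{i=1}^{k}\ker(A-\mu_{i}I)$ with each $|\mu_{i}|=1$; but then every orbit $A^{n}x=\sum_{i}\mu_{i}^{n}e_{i}(A)x$ is bounded, and no bounded orbit can be dense in the infinite-dimensional space $X$. This contradiction yields infinitely many roots of unity in $\sigma_p(A)$.

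Next I would deduce that $\sigma(A)$ has no isolated points. If $\lambda_{0}$ were isolated in $\sigma(A)$, the Riesz projection $P=\frac{1}{2\pi i}\oint_{\gamma}(zI-A)^{-1}\,dz$ over a small positively oriented circle $\gamma$ about $\lambda_{0}$ would be a nonzero idempotent commuting with $A$, producing a splitting $X=M\oplus N$ as above with $\sigma(A|_{M})=\{\lambda_{0}\}$. By the reduction lemma $A|_{M}$ is again hypercyclic with dense periodic points on the infinite-dimensional space $M$, so the preceding paragraph applies to $A|_{M}$ and forces $\sigma_p(A|_{M})$ to contain infinitely many roots of unity; this is impossible, since $\sigma_p(A|_{M})\subseteq\sigma(A|_{M})=\{\lambda_{0}\}$ is a single point. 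Hence no point of $\sigma(A)$ is isolated. The main obstacle is the structural dichotomy in the middle step: one must verify that $\ker p(A)$ is closed with the stated eigenspace decomposition, and that its coincidence with $X$ forces the unimodular scalar form of $A$ on a dense set. This boundedness-of-orbits obstruction is precisely what converts finiteness of the root-of-unity spectrum into a contradiction, and it is also what makes the point-spectrum statement bootstrap, via the spectral subspace, into the no-isolated-points statement.
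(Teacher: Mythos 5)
Your argument is, in substance, the standard proof of \cite[Proposition $5.7$]{Grosse-Erdmann-Manguillot} (the paper itself gives no proof, deferring to the citation): Lagrange idempotents identifying $\ker p(A)$ with the direct sum of the finitely many root-of-unity eigenspaces, boundedness of all orbits as the contradiction, and the Riesz projection together with a restriction lemma to bootstrap into the no-isolated-points claim. The individual steps you carry out are sound: $P\left(\orb(x,A)\right)=\orb\left(Px,A|_M\right)$ is dense in $M=PX$ because $P$ is continuous and surjective onto $M$; each periodic point decomposes into eigenvectors with root-of-unity eigenvalues since $z^N-1$ has simple roots (this uses complex scalars, which are given); $\ker p(A)$ is closed as the kernel of a bounded operator; and a bounded orbit cannot be dense in a nonzero normed space.

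There is, however, a genuine gap relative to the statement as literally quoted: twice you invoke density of the periodic points --- in the reduction lemma (inheritance of ``density of periodic points'') and, crucially, in ``Density of $\Per(A)$ would then force $\ker p(A)=X$'' --- but the stated hypothesis is only hypercyclicity, which does not give $\bar{\Per(A)}=X$. This cannot be repaired, because for merely hypercyclic operators the conclusion is false: by Salas' theorem, $I+B_w$ is hypercyclic on $l_2$ (or $l_1$) for every bounded positive weight sequence $w$, and choosing $w_k\to 0$ makes $B_w$ compact and quasinilpotent, so that $\sigma(I+B_w)=\{1\}$ is an isolated point and $\sigma_p(I+B_w)=\{1\}$ contains exactly one root of unity. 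The correct hypothesis --- and the one in the cited \cite[Proposition $5.7$]{Grosse-Erdmann-Manguillot} --- is that $A$ be \emph{chaotic}; the word ``hypercyclic'' in the paper's statement appears to be a slip, consistent with how the proposition is actually applied later (in contrapositive form, to conclude that $A_w$ with $|w|=1$ is \emph{not chaotic}). Read with the chaotic hypothesis, which your proof silently assumes at exactly those two steps, the argument is correct and coincides with the book's.
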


\section{Nonclassical Sequence Space}\label{secncss}

\subsection{Space and Its Elements}\


For us, underlying is the following nonclassical infinite-dimensional sequence space
   \[
X:=\left\{ \left(x_k\right)_{k\in \N}\in \F^\N\,\middle|\, 
\sum_{k=1}^\infty \left|\frac{x_{k+1}}{k+1}-\frac{x_{k}}{k}\right|<\infty\ \text{and}\ \lim_{k\to\infty}\frac{x_k}{k} = 0\right\}
  \]
introduced in \cite{Grosse-Erdmann2000}, which is separable and Banach relative to the norm
    $$X \ni x:=\left(x_k\right)_{k\in \N}\mapsto \|x\| :=\sum_{k=1}^\infty \left| \frac{x_{k+1}}{k+1}- \frac{x_{k}}{k} \right|$$
(see also \cite[Exercise $4.1.3$]{Grosse-Erdmann-Manguillot}) and referred to as $(X,\|\cdot\|)$ henceforth.

\begin{rem}\label{remse1}
Thus, a sequence $(x_k)_{k\in \mathbb{N}}\in \F^\N$
is in the space $X$ \textit{iff} it satisfies the two conditions:
\[ 
\left(\frac{x_{k+1}}{k+1}-\frac{x_{k}}{k}\right)_{k\in \mathbb{N}}\in l_1 \space\ \space\ \text{and} \space\ \space\ \left(\frac{x_k}{k}\right)_{k\in \mathbb{N}}\in c_0,
\]
the latter condition being equivalent to
\[
x_k=o(k),\ k\to\infty.
\] 

The following examples illustrate the independence of the above conditions.
\end{rem}

\begin{exmps}\label{exmpsse1}\
\begin{enumerate}[label=\arabic*.]
\item $\left(1,1,1,\dots\right)\in X$ since
\[
\left(\frac{1}{k+1}-\frac{1}{k}\right)_{k\in \mathbb{N}}=\left(-\frac{1}{k(k+1)}\right)\in l_1 \quad \text{and} \quad \left(\frac{1}{k}\right)_{k\in \mathbb{N}}\in c_0.
\]
\item $\left(\frac{(-1)^k}{\ln(k+1)}\right)_{k\in \mathbb{N}}\notin X $ since
\begin{align*}
&\left(\frac{\frac{(-1)^{k+1}}{\ln(k+2)}}{(k+1)}-\frac{\frac{(-1)^k}{\ln(k+1)}}{k}\right)_{k\in \mathbb{N}}
=\left(\frac{\frac{(-1)^{k+1}}{\ln(k+2)}}{(k+1)}+\frac{\frac{(-1)^{k+1}}{\ln(k+1)}}{k}\right)_{k\in \mathbb{N}}\notin l_1 \\
& \text{although} \quad \left(\frac{(-1)^k}{k\ln(k+1)}\right)\in c_0.
\end{align*}
\item $(k)_{k\in \mathbb{N}}\notin X$ since
\[
\left(\frac{k+1}{k+1}-\frac{k}{k}\right)_{k\in \mathbb{N}}=\left(0,0,0,\dots\right)\in l_1 \quad \text{but} \quad \left(\frac{k}{k}\right)_{k\in \mathbb{N}}=\left(1,1,1,\dots\right)\notin c_0.
\]
\item $\left(k^2\right)_{k\in \mathbb{N}}\notin X $ since
\[
\left(\frac{(k+1)^2}{k+1}-\frac{k^2}{k}\right)_{k\in \mathbb{N}}=\left(1,1,1,\dots\right)\notin l_1 \quad \text{and} \quad \left(\frac{k^2}{k}\right)_{k\in \mathbb{N}}=(k)_{k\in \mathbb{N}}\notin c_0.
\]
\end{enumerate}
\end{exmps}










The following example relates the well-known class of power sequences to the space $X$.

\begin{exmp}[Power Sequences]\label{exmpse2}\ \\
For $p\in \R$,
\[
\left(k^p\right)_{k\in \N}\in X
\iff p<1.
\]

In particular,
\[
\left(1,1,1,\dots\right),\ \left(\sqrt{k}\right)_{k\in \N}\in X
\quad \text{and}\quad
\left(k\right)_{k\in \N},\ \left(k^2\right)_{k\in \N}\notin X.
\]

Indeed,
\[
\left(\frac{k^p}{k}\right)_{k\in \N}=\left(k^{p-1}\right)_{k\in \N}\in c_0\iff p-1<0
\iff p<1.
\]

Further, since, for $p-1<0$,
    \begin{align*}
    \sum_{k=1}^\infty\left|\frac{(k+1)^p}{k+1}-\frac{k^p}{k}\right|&=\sum_{k=1}^\infty\left|(k+1)^{p-1}-k^{p-1}\right|=\sum_{k=1}^\infty\left(k^{p-1}-(k+1)^{p-1}\right)\\
&=\lim_{n\to \infty}\sum_{k=1}^n\left(k^{p-1}-(k+1)^{p-1}\right)=1-\lim_{n\to \infty}(n+1)^{p-1}=1,
    \end{align*}
we infer that $\left(k^p\right)_{k\in \N}\in X$.

In particular, 
\[
\forall\, p<1: \ \left\|\left(k^p\right)_{k\in \N}\right\|=1.
\]
\end{exmp}

\begin{rem}
As follows from Examples \ref{exmpsse1} and Example \ref{exmpse2} (see also Remark \ref{c0clinf}),
\[
l_\infty\supset c\supset c_0\ni \left(\frac{(-1)^k}{\ln(k+1)}\right)_{k\in \N}\notin X\ni \left(\sqrt{k}\right)_{k\in \N}\notin  l_\infty,
\]

Hence,
\[
 c_0\not\subseteq X\not\subseteq  l_\infty.
\]

Thus, we conclude that the space $X$ is not a part of the hierarchy of the classical 
spaces $c_0$, $c$, and $l_\infty$.
\end{rem}
\subsection{Isometric Isomorphisms}\label{secii}\

Let us look at the following classical space
\[
bv_0:=\left\{(x_k)_{k\in \N}\in c_0\,\middle|\, \sum_{k=1}^\infty\left|x_{k+1}-x_k\right|<\infty\right\},
\]
of \textit{vanishing sequences of bounded variation} with the norm 
\[
bv_0\ni x:=(x_k)_{k\in \N}\mapsto \|x\|_0:= \sum_{k=1}^\infty\left|x_{k+1}-x_k\right|.
\]
(see, e.g., \cite{Dun-SchI}).


\begin{prop}[Isometric Isomorphisms Proposition]\label{IIP}\

\begin{enumerate}[label=\arabic*.]
\item The space $(X,\|\cdot\|) $ is isometrically isomorphic to the space $bv_0$ under the mapping
\[
X\ni x:=(x_k)_{k\in \N}\mapsto
J_1x:=\left(\frac{x_{k}}{k}\right)_{k\in \N}\in bv_0
\]
with the inverse
\[
bv_0\ni y:=(y_k)_{k\in \N}\mapsto J_1^{-1}y:=\left(ky_k\right)_{k\in \N}\in X.
\]
\item The space $bv_0$ is isometrically isomorphic to the space $l_1$ under the mapping
\[
bv_0\ni y:=(y_k)_{k\in \N}\mapsto J_2y:=\left(y_{k+1}-{y_k}\right)_{k\in \N}\in l_1,
\]
with the inverse
\[
l_1\ni z:=(z_k)_{k\in \N}\mapsto J_2^{-1}z:=\left(\sum_{j=0}^{k-1}z_j\right)_{k\in \N}\in bv_0,
\]
where 
\[
z_0:=-\sum_{k=1}^\infty z_k.
\]
\item The space $(X,\|\cdot\|) $ is isometrically isomorphic to the space $l_1$ under the mapping $J:=J_2J_1$, i.e.,
\[
X\ni x:=(x_k)_{k\in \N}\mapsto Jx:=\left(\frac{x_{k+1}}{k+1}-\frac{x_k}{k}\right)_{k\in \N}\in l_1
\]
with the inverse $J^{-1}=J_1^{-1}J_2^{-1}$, i.e.,
\[
l_1\ni z:=(z_k)_{k\in \N}\mapsto J^{-1}z:=\left(k\sum_{j=0}^{k-1}z_j\right)_{k\in \N}\in X,
\]
where
\[
z_0:=-\sum_{k=1}^\infty z_k.
\]
\end{enumerate}
\end{prop}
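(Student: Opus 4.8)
The plan is to establish parts 1 and 2 directly and then deduce part 3 by composition. For each of the maps $J_1$ and $J_2$ I would check four things: that it carries the source space into the target space, that it is linear, that the stated formula is a genuine two-sided inverse (hence the map is a bijection), and that it preserves norms. Linearity is immediate from the coordinatewise definitions in both cases, so the real content lies in the well-definedness, the inverse relations, and the isometry.

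For part 1, the key observation is that the two defining conditions of $X$ translate termwise into the two defining conditions of $bv_0$ under division by $k$. Writing $y_k := x_k/k$, the condition $(x_k/k)_{k\in\N}\in c_0$ is exactly $y\in c_0$, while $\sum_{k=1}^\infty|x_{k+1}/(k+1)-x_k/k|<\infty$ is exactly $\sum_{k=1}^\infty|y_{k+1}-y_k|<\infty$, i.e. $y\in bv_0$. This simultaneously shows $J_1x\in bv_0$ and that $\|J_1x\|_0=\|x\|$, so $J_1$ is norm-preserving; the same computation run in reverse shows $J_1^{-1}y=(ky_k)_{k\in\N}\in X$, and the relations $J_1^{-1}J_1=I$, $J_1J_1^{-1}=I$ are a one-line check. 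This part is routine.

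The crux is part 2. The forward map $J_2y=(y_{k+1}-y_k)_{k\in\N}$ lands in $l_1$ precisely because $y\in bv_0$, and its $l_1$-norm $\sum_{k=1}^\infty|y_{k+1}-y_k|$ equals $\|y\|_0$ by definition, so $J_2$ is a norm-preserving linear map into $l_1$. The hard part is the inverse: given $z\in l_1$, I must produce a sequence $w\in bv_0$ whose consecutive differences equal $z$. The discrete antiderivative $w_k=\sum_{j=0}^{k-1}z_j$ has differences $w_{k+1}-w_k=z_k$, so the bounded-variation requirement $\sum|w_{k+1}-w_k|=\sum|z_k|<\infty$ is automatic; what is \emph{not} automatic is membership in $c_0$. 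This is where the free initial constant $z_0$ is forced: since $w_k=z_0+\sum_{j=1}^{k-1}z_j\to z_0+\sum_{j=1}^\infty z_j$ as $k\to\infty$ (the tail converging because $z\in l_1$), the limit vanishes iff $z_0=-\sum_{k=1}^\infty z_k$, exactly the stipulated value. I expect this $c_0$-normalization to be the main obstacle, and it is resolved by this single choice.

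With the inverse in hand, I would verify both composition identities. The direction $J_2J_2^{-1}=I$ is immediate from $w_{k+1}-w_k=z_k$. For $J_2^{-1}J_2=I$, setting $z=J_2y$ one computes $z_0=-\sum_{k=1}^\infty(y_{k+1}-y_k)=y_1$ by telescoping together with $y_k\to 0$, after which $\sum_{j=0}^{k-1}z_j$ telescopes back to $y_k$. Finally, part 3 follows formally: $J=J_2J_1$ and $J^{-1}=J_1^{-1}J_2^{-1}$ are isometric isomorphisms $X\to l_1$ and $l_1\to X$, and substituting the coordinate formulas of $J_1,J_2$ and their inverses reproduces the displayed expressions for $Jx$ and $J^{-1}z$, with the normalization $z_0=-\sum_{k=1}^\infty z_k$ inherited directly from part 2.
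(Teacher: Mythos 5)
Your proposal is correct and follows essentially the same route as the paper: reduce part 1 to the termwise translation of the defining conditions of $X$ into those of $bv_0$, prove part 2 by exhibiting the discrete antiderivative with the normalization $z_0=-\sum_{k=1}^\infty z_k$ forced by the $c_0$ requirement, and obtain part 3 by composition. The only cosmetic difference is that the paper establishes injectivity of $J_2$ by a separate kernel argument (constant sequences in $c_0$ vanish) while you get it from the identity $J_2^{-1}J_2=I$; both rest on the same telescoping computation.
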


\begin{proof}\
\begin{enumerate}[label=\arabic*.]
\item As is easily seen, $J_1:X\to bv_0$ is an \textit{injective linear operator} which is a bijection between $X$ and $bv_0$ since
\[
\forall\, y:=\left(y_k\right)_{k\in \N}\in bv_0:\
x:=(ky_k)_{k\in \N} \in X
\ \text{and}\ J_1x=\left(\frac{ky_k}{k}\right)_{k\in \N}=y.
\]

Since further
\begin{align*}
\forall\, x:=\left(x_k\right)_{k\in \N}\in X:\ &
\|J_1x\|_0
=\left\|\left(\frac{x_k}{k}\right)_{k\in \N}\right\|_0\\
&=\sum_{k=1}^{\infty}\left|\frac{x_{k+1}}{k+1}-\frac{x_k}{k}\right|=\|x\|,
\end{align*}
we infer that $J_2$ is an \textit{isometric isomorphism} between the spaces $X$ and $bv_0$ with the inverse
\[
bv_0\ni y:=(y_k)_{k\in \N}\mapsto J_1^{-1}y:=\left(ky_k\right)_{k\in \N}\in X.
\]
\item  As is easily seen, $J_2:bv_0\to l_1$ is a linear operator. 

Suppose that for an arbitrary $y:=(y_k)_{k\in \N}\in bv_0$,
\[
J_2y=\left(y_{k+1}-{y_k}\right)_{k\in \N}=\left(0,0,0\dots\right).
\]

Then, inductively, 
\[
\forall{k}\in \N: \, y_k=y_1,
\]
i.e., $y$ is a constant sequence.

Thus, in view of the fact that
\[
y_k\to 0, \, k\to \infty,
\]
implies that $y=\left(0,0,0\dots\right)$.

Therefore, the linear operator $J_2$ is \textit{injective} (see, e.g., \cite{Markin2018EFA,Markin2020EOT}).

For an arbitrary $z:=\left(z_k\right)_{k\in \N}\in l_1 $, let
\[
z_0:=-\sum_{k=1}^{\infty}z_k\in \F .
\]

Then
\[
y:=\left(\sum_{j=0}^{k-1}z_j\right)_{k\in \N}\in bv_0
\]
since
\[
\lim_{k\to\infty}y_k=\lim_{k\to\infty}\sum_{j=0}^{k-1}z_j=z_0+\sum_{j=1}^{\infty}z_j=0,
\]
and
\[
\sum_{k=1}^{\infty}|y_{k+1}-y_k|=
\sum_{k=1}^{\infty}\left|\sum_{j=0}^{k}z_j-\sum_{j=0}^{k-1}z_j\right|=\sum_{k=1}^{\infty}|z_k|<\infty.
\]

Moreover, 
\[
J_2y=\left(\sum_{j=0}^kz_j-\sum_{j=0}^{k-1}z_j\right)_{k\in \N}=(z_k)_{k\in \N}=z.
\]

Hence, the operator $J_2$ is a \textit{bijection} between $bv_0$ and $l_1$.

Since further
\begin{align*}
\forall\, y:=(y_k)_{k\in \N}\in bv_0:\ &
\|J_2y\|_1=\left\|\left(y_{k+1}-y_k\right)_{k\in \N}\right\|_1\\
&=\sum_{k=1}^{\infty}\left|y_{k+1}-y_k\right|=\|y\|_0,
\end{align*}
where $\|\cdot\|_1 $ is the norm of $l_1$, we infer that $J_2$ is an \textit{isometric isomorphism} between the spaces $bv_0$ and $l_1$ with the inverse
\[
l_1\ni z:=(z_k)_{k\in \N}\mapsto J_2^{-1}z:=\left(\sum_{j=0}^{k-1}z_j\right)_{k\in \N}\in bv_0,
\]
where 
\[
z_0:=-\sum_{k=1}^\infty z_k.
\]
\item By parts $1$ and $2$, it is clear that $J:=J_2J_1$ is an \textit{isometric isomorphism} between $X$ and $l_1$, and hence,
\begin{align*}
\forall\, x:=\left(x_k\right)_{k\in \N}\in X: \ &Jx=J_2J_1x=J_2\left(\frac{x_k}{k}\right)_{k\in \N}\\
&=\left(\frac{x_{k+1}}{k+1}-\frac{x_k}{k}\right)_{k\in\N}\in l_1.    
\end{align*}

Moreover, $J^{-1}:=J_1^{-1}J_2^{-1}$ is the inverse of $J$, and thus,
\begin{align*}
\forall\, z:=\left(z_k\right)_{k\in \N}\in l_1: \ &J^{-1}z=J_1^{-1}J_2^{-1}z=J_1^{-1}\left(\sum_{j=0}^{k-1}z_j\right)_{k\in \N}\\
&=\left(k\sum_{j=0}^{k-1}z_j\right)_{k\in \N} \in X,
\end{align*}
where
\[
z_0:=-\sum_{k=1}^\infty z_k.
\]
\end{enumerate}
\end{proof}

\begin{rems}\label{remsIIP}\
\begin{itemize}
\item The fact that the space $bv_0$ is \textit{isometrically isomorphic} to $l_1$ is stated as  \cite[Exercise IV.$13.11$]{Dun-SchI} (see also \cite{de Malafosse-Malkowsky-Rakocevic}).
\item Since $l_1$ is an infinite dimensional separable Banach space, as follows from the prior proposition, so are the spaces $bv_0$ and $(X,\|\cdot\|)$.
\item Since the subspace $c_{00}$ of eventually zero sequences is dense in the space $l_1$ and $J_2^{-1}(c_{00})=J^{-1}(c_{00})=c_{00}$, by the prior proposition, $c_{00}$ is also dense in the spaces $bv_0$ and $(X,\|\cdot\|)$.
\end{itemize}
\end{rems}

\subsection{Continuous and Dense Embeddings}\

The latter remark is consistent with the following proposition further relating the space $(X,\|\cdot\|)$ to the spaces $l_1$ and $bv_0$.

\begin{prop}[Continuous and Dense Embeddings]\label{CDE}\ \\
The chain of proper inclusions
\begin{equation}\label{DCE4}
c_{00}\subset l_1\subset bv_0\subset X.
\end{equation}
holds, where the embeddings 
\[
l_1\hookrightarrow bv_0\quad \text{and}\quad bv_0\hookrightarrow X
\]
are continuous and dense with
\begin{equation}\label{DCE5}
 \|z\|_0\le 2\|z\|_1, \ z\in l_1, \quad \text{and} \quad \|y\|\le \|y\|_0,\ y\in bv_0.    
\end{equation}

Furthermore, the embedding $ l_1\hookrightarrow X $ is a continuous and dense  embedding with
\begin{equation}\label{DCE6}
\|z\|\le \|z\|_1, \ z\in l_1. 
\end{equation}
\end{prop}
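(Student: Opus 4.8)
The plan is to split the statement into three independent tasks: the norm inequalities \eqref{DCE5} and \eqref{DCE6} (which give continuity and well-definedness of the three inclusion maps at once), the properness of the inclusions in \eqref{DCE4}, and the density assertions. The density is essentially free: by the last item of Remark \ref{remsIIP} the subspace $c_{00}$ is dense both in $bv_0$ and in $(X,\|\cdot\|)$, and it is classically dense in $l_1$. Since $c_{00}\subseteq l_1\subseteq bv_0\subseteq X$, the density of $c_{00}$ in $bv_0$ forces $l_1$ to be dense in $bv_0$, while its density in $X$ forces both $bv_0$ and $l_1$ to be dense in $X$; so no separate argument is needed there.

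For the inclusions in \eqref{DCE5} I would first dispatch $l_1\hookrightarrow bv_0$. For $z\in l_1$ one has $z\in c_0$, and the triangle inequality gives $\sum_k|z_{k+1}-z_k|\le\sum_k(|z_{k+1}|+|z_k|)\le 2\|z\|_1$, which simultaneously shows $z\in bv_0$ and yields $\|z\|_0\le 2\|z\|_1$. The bound $\|y\|\le\|y\|_0$ for $y\in bv_0$ is the first genuinely computational step. Setting $a_k:=y_{k+1}-y_k$, so that $\|y\|_0=\sum_k|a_k|$, and using $y\in c_0$ to write $y_k=-\sum_{j\ge k}a_j$, I would expand
\[
\frac{y_{k+1}}{k+1}-\frac{y_k}{k}=\frac{a_k}{k+1}+\frac{1}{k(k+1)}\sum_{j\ge k}a_j,
\]
take absolute values, sum over $k$, and interchange the order of summation in the resulting double sum; the inner weight telescopes via $\sum_{k=1}^{j}\frac{1}{k(k+1)}=\frac{j}{j+1}$, and the two contributions recombine to exactly $\sum_j|a_j|=\|y\|_0$. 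The same computation simultaneously confirms $y\in X$, the tail condition holding because $y_k/k\to 0$.

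The main obstacle is the \emph{sharp} constant in \eqref{DCE6}: naively composing $\|z\|\le\|z\|_0\le 2\|z\|_1$ produces only the constant $2$, whereas the claim is $\|z\|\le\|z\|_1$. To recover the optimal constant I would argue directly through the isometric isomorphism $J\colon X\to l_1$ of Proposition \ref{IIP}. Restricted to $l_1\subseteq X$ this is the $l_1$-operator $z\mapsto\left(\tfrac{z_{k+1}}{k+1}-\tfrac{z_k}{k}\right)_{k\in\N}$, whose matrix has in column $j$ only the entry $-\tfrac1j$ (from the $k=j$ term) together with, for $j\ge 2$, the entry $\tfrac1j$ (from the $k=j-1$ term). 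The column sums are thus $1$ for $j=1$ and $\tfrac2j\le 1$ for $j\ge 2$; since the $l_1\to l_1$ operator norm equals the supremum of the column sums, this norm is exactly $1$, giving $\|z\|=\|Jz\|_1\le\|z\|_1$. Finally, I would record the properness witnesses: $(k^{-2})_{k\in\N}\in l_1\setminus c_{00}$; the sequence $(1/k)_{k\in\N}\in bv_0\setminus l_1$, which vanishes with $\sum_k\tfrac{1}{k(k+1)}<\infty$ yet is not summable; and $(\sqrt{k})_{k\in\N}\in X\setminus bv_0$ by Example \ref{exmpse2}, this last one failing to vanish and hence lying outside $c_0\supset bv_0$.
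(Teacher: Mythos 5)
Your proposal is correct and follows essentially the same route as the paper: the triangle inequality for $l_1\hookrightarrow bv_0$, an Abel-summation estimate giving $\|y\|\le\|y\|_0$, a direct column-weight computation for the sharp constant in $\|z\|\le\|z\|_1$, density via the common dense subspace $c_{00}$, and explicit properness witnesses. The only differences are presentational: you interchange a double sum where the paper telescopes the tails $r_k=\sum_{j\ge k}|y_{j+1}-y_j|$, and you use $(\sqrt{k})_{k\in\N}$ instead of $(1,1,1,\dots)$ as the witness for $X\setminus bv_0$ --- both of which work.
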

\begin{proof}
First, it is clear that $ c_{00}\subset l_1$ is a proper inclusion. Moreover, 

\[
\forall\, z:=(z_k)_{k\in \N}\in l_1:\ z\in c_0
\]
and
\begin{equation}\label{DCE1}
\sum_{k=1}^\infty\left|z_{k+1}-z_k\right|\le \sum_{k=1}^\infty|z_{k+1}|+\sum_{k=1}^\infty|z_k|\le 2\|z\|_1<\infty.
\end{equation}

Hence, the inclusion $l_1\subseteq bv_0$ holds.

Moreover,
\[
bv_0\ni \left(\frac{1}{k}\right)_{k\in \N}\notin l_1,
\]
since $\left(\frac{1}{k}\right)_{k\in \N}\in c_0$ and
\[
\sum_{k=1}^\infty\frac{1}{k}=\infty\quad \text{while} \quad \sum_{k=1}^\infty\left|\frac{1}{k+1}-\frac{1}{k}\right|
=\sum_{k=1}^\infty\left(\frac{1}{k}-\frac{1}{k+1}\right)=1<\infty.
\]

Therefore, the inclusion $l_1\subset bv_0$ is proper.

Due to estimate \eqref{DCE1},
\[
 \|z\|_0\le 2\|z\|_1, \ z\in l_1,
\]
which makes the embedding $l_1\hookrightarrow bv_0$
\textit{continuous}.

Consider an arbitrary $y:=(y_k)_{k\in \N}\in bv_0$.

In view of $bv_0\subseteq c_0$,
\[
\left(\frac{y_k}{k}\right)_{k\in \N}\in c_0.
\]

Furthermore, since $y\in bv_0$,
\[
\left(r_k:=\sum_{j=k}^\infty|y_{j+1}-y_j|\right)_{k\in \N}\in c_0\left(\R_+\right).
\]

Hence, the following is clear
\begin{equation}\label{1}
\forall\, k\in \N: |y_{k+1}-y_k|=r_k-r_{k+1}
\end{equation}
and
\begin{equation}\label{2}
0\le |y_k|=\sum_{j=k}^\infty\left(|y_j|-|y_{j+1}|\right)\le \sum_{j=k}^\infty|y_{j+1}-y_j|=r_k.
\end{equation}

In view of $r_1=\|y\|_0$, using \eqref{1} and \eqref{2}, for an arbitrary $n\in \N$, we deduce
\begin{align*}
\sum_{k=1}^n\left|\frac{y_{k+1}}{k+1}-\frac{y_k}{k}\right|&=\sum_{k=1}^\infty\left|\frac{y_{k+1}}{k+1}-\frac{y_k}{k+1}+\frac{y_k}{k+1}-\frac{y_k}{k}\right|\\
&=\sum_{k=1}^n\left|\frac{y_{k+1}-y_k}{k+1}-\frac{y_k}{k(k+1)}\right|\\
&\le \sum_{k=1}^n\left(\frac{|y_{k+1}-y_k|}{k+1}+\frac{|y_k|}{k(k+1)}\right)=\sum_{k=1}^n\left(\frac{r_k-r_{k+1}}{k+1}+\frac{|y_k|}{k(k+1)}\right)\\
&\le \sum_{k=1}^n\left(\frac{r_k-r_{k+1}}{k+1}+\frac{r_k}{k(k+1)}\right)\\
&=\sum_{k=1}^n\left(\frac{r_k}{k+1}-\frac{r_{k+1}}{k+1}+\frac{r_k}{k}-\frac{r_k}{k+1}\right)\\
&=\sum_{k=1}^n\left(\frac{r_k}{k}-\frac{r_{k+1}}{k+1}\right)=r_1-\frac{r_{n+1}}{n+1}=\|y\|_0-\frac{r_{n+1}}{n+1}\le \|y\|_0.
\end{align*}

Hence, passing to the limit as $n\to \infty$,

\[
\|y\|=\sum_{k=1}^\infty\left|\frac{y_{k+1}}{k+1}-\frac{y_k}{k}\right|\le \|y\|_0.
\]

Thus, the inclusion $ bv_0\subseteq X$ holds and
\[
\|y\|\le \|y\|_0, \ y\in bv_0,
\]
so that the embedding $bv_0\hookrightarrow X$ is \textit{continuous}.

Moreover, since
\[
X\ni \left(1,1,1,\dots\right)\notin c_0\supseteq bv_0
\]
(see Examples \ref{exmpsse1}), the inclusion $bv_0\subset X $ is proper.

Thus, chain of proper inclusions \eqref{DCE4} holds, the embeddings
\[
l_1\hookrightarrow bv_0\quad \text{and}\quad bv_0\hookrightarrow X
\]
being \textit{continuous} with estimates \eqref{DCE5} in place.

Since  $\left\{e_n:=\left(\delta_{nk}\right)_{k\in \N}\right\}_{n\in \N}$ is a \textit{Schauder basis} for $l_1$ (see Section \ref{seccfcss}), $bv_0$, and $(X,\|\cdot\|)$ (see Section \ref{secconv}) we infer that
\[
c_{00}=\spa\left(\left\{e_n:=\left(\delta_{nk}\right)_{k\in \N}\right\}_{n\in \N}\right)
\]
is a \textit{dense} subspace in $l_1$, $bv_0$, and $(X,\|\cdot\|)$.

Thus, the continuous embeddings
\[
l_1\hookrightarrow bv_0\quad \text{and}\quad bv_0\hookrightarrow X
\]
are also \textit{dense}.

The continuity and denseness of the embeddings 
\[
l_1\hookrightarrow bv_0\quad \text{and}\quad bv_0\hookrightarrow X
\]
instantly implies, the \textit{continuity} and \textit{denseness} for the embedding 
\[
l_1\hookrightarrow X,
\]
in which, by estimates \eqref{DCE5},
\[
\|z\|\le \|z\|_0
\le 2\|z\|_1,\ z\in l_1.
\]

Let us show that the latter estimate can be refined to estimate \eqref{DCE6}. Indeed,
\begin{align*}
\forall\, z:=(z_k)_{k\in \N}\in l_1, \ \forall\, n\in \N,\ n\ge 2:\ &\sum_{k=1}^{n}\left|\frac{z_{k+1}}{k+1}-\frac{z_k}{k}\right|
\le \sum_{k=1}^n\left|\frac{z_{k+1}}{k+1}\right|+\sum_{k=1}^n\left|\frac{z_k}{k}\right|\\
&=|z_1|+\frac{|z_{n+1}|}{n+1}+2\sum_{k=2}^n\frac{|z_k|}{k}\\
&\le |z_1|+|z_{n+1}|+2\sum_{k=2}^n\frac{|z_k|}{2}\\
&=|z_1|+|z_{n+1}|+\sum_{k=2}^n|z_k|=\sum_{k=1}^{n+1}|z_k|.
\end{align*}

Hence, 
\[
\|z\|:=\sum_{k=1}^{\infty}\left|\frac{z_{k+1}}{k+1}-\frac{z_k}{k}\right|
=\lim_{n\to \infty}\sum_{k=1}^{n}\left|\frac{z_{k+1}}{k+1}-\frac{z_k}{k}\right|
\le \lim_{n\to \infty}\sum_{k=1}^{n+1}|z_k|
=\sum_{k=1}^{\infty}|z_k|=\|z\|_1.
\]
\end{proof}

\begin{exmp}[Exponential Sequences]\label{exps}\ \\
For $\lambda\in \C$, 
\[
 \left(\lambda^k\right)_{k\in \mathbb{N}}\in X
 \iff
 |\lambda|<1 \quad \text{or} \quad \lambda=1.
\]

In particular,
\[
\left(1,1,1,\dots\right),\ \left(2^{-k}\right)_{k\in \N}\in X \quad \text{and} \quad 
\left(i^k\right)_{k\in \N},\
\left((-1)^k\right)_{k\in \N},\
\left(2^k\right)_{k\in \N}\notin X.
\]
($i$ is the \textit{imaginary unit}).

Indeed, for $\lambda=1$,
\[
\left(\lambda^k\right)_{k\in \N}=\left(1,1,1,\dots\right)_{k\in \N}\in X 
\]

(see Examples \ref{exmpsse1}). 

For $|\lambda|<1$, since
\[
\left(\lambda^k\right)_{k\in \N}\in l_1,
\]

by the prior proposition, $\left(\lambda^k\right)_{k\in \mathbb{N}}\in X$.

For $|\lambda|>1$, since 
\[
\left(\frac{\lambda^k}{k}\right)\notin c_0,
\]
we infer that $\left(\lambda^k\right)_{k\in \mathbb{N}}\notin X$.

For $\lambda=-1$, 
\[
\sum_{k=1}^\infty\left|\frac{(-1)^{k+1}}{k+1}-\frac{(-1)^k}{k}\right|=\sum_{k=1}^\infty\left|(-1)^{k+1}\right|\left(\frac{1}{k+1}+\frac{1}{k}\right)=\sum_{k=1}^\infty\left(\frac{1}{k+1}+\frac{1}{k}\right)=\infty,
\]

so that $\left(\lambda^k\right)_{k\in \N}=\left((-1)^k\right)_{k\in \N}\notin X$.

Lastly, for $|\lambda|=1$ and $\lambda\notin \{-1,1\}$,

\[
\exists\, \theta\in (-\pi,\pi]\backslash \{0,\pi\}: \ \lambda=e^{i\theta}=\cos \theta +i\sin \theta.
\]

Hence, 
\begin{align*}
\forall\, k\in\N:\ \left|\frac{e^{i\theta (k+1)}}{k+1}-\frac{e^{i\theta k}}{k}\right|&=\left|e^{i\theta k}\right|\left|\frac{e^{i\theta}}{k+1}-\frac{1}{k}\right|=\left|\frac{\cos \theta+i\sin \theta}{k+1}-\frac{1}{k}\right|\\
&=\left|\left(\frac{\cos\theta}{k+1}-\frac{1}{k}\right)+i\frac{\sin\theta}{k+1}\right|\ge \frac{|\sin\theta|}{k+1}.
\end{align*}

Moreover, since $\theta\in (-\pi,\pi]\backslash \{0,\pi\}$ so that $|\sin\theta|\ne 0$, by the \emph{Comparison Test},
\[
\sum_{k=1}^\infty\left|\frac{e^{i\theta (k+1)}}{k+1}-\frac{e^{i\theta k}}{k}\right|=\infty.
\]

Therefore, $\left(\lambda^k\right)_{k\in \N}\notin X$.
\end{exmp}

\subsection{Convergence}\label{secconv}\

The set $\left\{e_n:=\left(\delta_{nk}\right)_{k\in \N}\right\}_{n\in \N}$,
where $\delta_{nk}$ is the \textit{Kronecker delta}, is a \textit{Schauder basis} for the space $l_1$ (see Preliminaries).

Hence, by the \emph{Isometric Isomorphisms Proposition} (Proposition \ref{IIP}),
\[
J^{-1}\left(\left\{e_n\right\}_{n\in \N}\right)
=\left\{J^{-1}e_n\right\}_{n\in \N}
\]
is a Schauder basis for $X$ ($J: X\to l_1$ is the corresponding isometric isomorphism), where
\begin{align*}
\forall\, n\in \N:\  J^{-1}e_n:=&\left(k\left(-\sum_{j=1}^\infty\delta_{nj}+\sum_{j=1}^{k-1}\delta_{nj}\right)\right)_{k\in \N}=\left(k\left(-\sum_{j=k}^{\infty}\delta_{nj}\right)\right)_{k\in \N}\\
=&(-1,-2,\dots,-n,0,0,\dots)= -\sum_{k=1}^nke_k.
\end{align*}

Furthermore, the set 
$\left\{e_n:=\left(\delta_{nk}\right)_{k\in \N}\right\}_{n\in \N}$, is also a \textit{Schauder basis} for $X$ and
\begin{equation}\label{cf}
\forall\, x:=(x_k)_{k\in \N}\in X:\ x=\sum_{k=1}^\infty x_ke_k     
\end{equation}
since
\begin{equation}\label{rmndr}
\begin{aligned}
\forall\, x:=(x_k)_{k\in \N}\in X:\
&\left\|x-\sum_{k=1}^n x_ke_k\right\|=\left\|\sum_{k=n+1}^\infty x_ke_k\right\|\\
&=\frac{|x_{n+1}|}{n+1}+\sum_{k=n+1}^\infty\left|\frac{x_{k+1}}{k+1}-\frac{x_k}{k}\right|\to 0, \ n\to \infty. 
\end{aligned}
\end{equation}

\begin{rem}
Similarly, by the \emph{Isometric Isomorphisms Proposition} (Proposition \ref{IIP}), 
\[
J_2^{-1}\left(\left\{e_n\right\}_{n\in \N}\right)=\left\{-\sum_{k=1}^ne_k\right\}_{n\in \N}
\]
is a \textit{Schauder basis} for the space $bv_0$ ($J_2: bv_0\to l_1$ is the corresponding \textit{isometric isomorphism}) as well as the set 
$\left\{e_n:=\left(\delta_{nk}\right)_{k\in \N}\right\}_{n\in \N}$. 
\end{rem}

Based on the above, we obtain the following

\begin{cor}[Characterization of Convergence in $(X,\|\cdot\|)$]\label{CC}\ \\
A sequence $\left(x^{(n)}:=\left(x_k^{(n)}\right)_{k\in \N}\right)_{n\in \N}$ converges to $x:=(x_k)_{k\in \N}$ in the space
$(X,\|\cdot\|)$, i.e.,
\[
x^{(n)}\to x, \ n\to \infty,
\]
iff
\begin{enumerate}[label=(\arabic*)]
\item $\forall\, k\in \N:\ x_k^{(n)}\to x_k,\ n\to\infty$, and
\item 
$\displaystyle
\forall\,\eps>0\space\ \exists\, K\in \N\ \forall\, n\in \N: \ \sum_{k=K+1}^{\infty}\left|\frac{x_{k+1}^{(n)}}{k+1}-\frac{x_k^{(n)}}{k}\right|<\eps$.
\end{enumerate}
\end{cor}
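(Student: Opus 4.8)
The plan is to obtain this characterization as a direct specialization of the General Characterization of Convergence (Theorem \ref{GCC}) to the space $(X,\|\cdot\|)$. By the discussion in Section \ref{secconv}, the set $\{e_n:=(\delta_{nk})_{k\in\N}\}_{n\in\N}$ is a Schauder basis for $X$, and, in view of expansion \eqref{cf} and the uniqueness of the Schauder expansion, its associated coordinate functionals are precisely the coordinate projections $c_k(x)=x_k$. With this identification, condition (1) of the corollary is literally condition (1) of Theorem \ref{GCC}, so nothing need be verified there, and everything reduces to reconciling the two versions of condition (2).

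The work lies entirely in matching condition (2) of the corollary to the abstract tail-norm condition (2) of Theorem \ref{GCC}. First I would invoke the explicit tail-norm formula \eqref{rmndr}, which, applied to each $x^{(n)}$, gives
\[
\left\|\sum_{k=K+1}^\infty x_k^{(n)}e_k\right\|=\frac{|x_{K+1}^{(n)}|}{K+1}+\sum_{k=K+1}^\infty\left|\frac{x_{k+1}^{(n)}}{k+1}-\frac{x_k^{(n)}}{k}\right|.
\]
The trivial lower bound
\[
\sum_{k=K+1}^\infty\left|\frac{x_{k+1}^{(n)}}{k+1}-\frac{x_k^{(n)}}{k}\right|\le \left\|\sum_{k=K+1}^\infty x_k^{(n)}e_k\right\|
\]
immediately yields the implication from Theorem \ref{GCC}(2) to the corollary's condition (2), uniformly in $n$.

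The main obstacle is the reverse implication, since the tail norm carries the extra leading term $\frac{|x_{K+1}^{(n)}|}{K+1}$ that is absent from the corollary's condition. To control it I would exploit that each $x^{(n)}\in X$, so $\frac{x_m^{(n)}}{m}\to 0$ as $m\to\infty$; telescoping the partial sums then gives
\[
\frac{x_{K+1}^{(n)}}{K+1}=-\sum_{k=K+1}^\infty\left(\frac{x_{k+1}^{(n)}}{k+1}-\frac{x_k^{(n)}}{k}\right),
\]
whence $\frac{|x_{K+1}^{(n)}|}{K+1}\le \sum_{k=K+1}^\infty\left|\frac{x_{k+1}^{(n)}}{k+1}-\frac{x_k^{(n)}}{k}\right|$ and therefore
\[
\left\|\sum_{k=K+1}^\infty x_k^{(n)}e_k\right\|\le 2\sum_{k=K+1}^\infty\left|\frac{x_{k+1}^{(n)}}{k+1}-\frac{x_k^{(n)}}{k}\right|.
\]
This two-sided estimate shows that, uniformly in $n$, the corollary's condition (2) is equivalent to Theorem \ref{GCC}(2). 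Finally, since the tail sums are non-increasing in $K$, the quantifier pattern ``$\exists\,K\ \forall\,n$'' of the corollary is equivalent to ``$\exists\,K_0\ \forall\,K\ge K_0\ \forall\,n$'' of Theorem \ref{GCC}, which completes the reduction and hence the proof.
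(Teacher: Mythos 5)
Your proposal is correct and follows essentially the same route as the paper: specialize Theorem \ref{GCC} to the standard basis with coordinate functionals $c_k(x)=x_k$, use the tail-norm formula \eqref{rmndr}, and absorb the extra term $\frac{|x_{K+1}^{(n)}|}{K+1}$ into the tail sum via the vanishing of $\frac{x_m^{(n)}}{m}$ and telescoping. The paper bounds that term with a termwise reverse-triangle-inequality telescoping rather than summing the telescoping series outright, but this is an immaterial variation of the same argument.
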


\begin{proof}
Since the set $\left\{e_n:=\left(\delta_{nk}\right)_{k\in \N}\right\}_{n\in \N} $ is a \textit{Schauder basis} for the space $(X,\|\cdot\|)$ with the coordinate functionals
\[
X\ni x:=(x_k)_{k\in \N}\mapsto c_k(x)=x_k,\ k\in \N,
\]
(see \eqref{cf}), by the \emph{General Characterization of Convergence Proposition} (Theorem \ref{GCC}), a sequence $\left(x^{(n)}:=\left(x_k^{(n)}\right)_{k\in \N}\right)_{n\in \N}$ converges to $x:=(x_k)_{k\in \N}$ in the space
$(X,\|\cdot\|)$, i.e.,
\[
x^{(n)}\to x, \ n\to \infty,
\]
iff
\begin{enumerate}
\item[$(1')$] $\forall\, k\in \N: \ c_k\left(x^{(n)}\right)=x_k^{(n)}\to c_k(x)=x_k$, $n\to \infty$ and
\item[$(2')$] $\forall\, \eps>0\ \exists\, K_0\in \N\ \forall\, K\ge K_0 \ \forall\, n\in \N:$
\[
\left\|\sum_{k=K+1}^\infty x_k^{(n)}e_k\right\|=\frac{\left|x_{K+1}^{(n)}\right|}{K+1}+\sum_{k=K+1}^\infty\left|\frac{x_{k+1}^{(n)}}{k+1}-\frac{x_k^{(n)}}{k}\right|<\eps
\]
(see \eqref{rmndr}).
\end{enumerate}

Thus, condition $(1)$ is equivalent to condition $(1')$.

It is clear that condition $(2')$ implies seemingly weaker condition $(2)$.

For the reverse implication, by condition $(2)$, suppose that
\[
\displaystyle
\forall\,\eps>0\space\ \exists\, K_0\in \N\ \forall\, n\in \N: \ \sum_{k=K_0+1}^{\infty}\left|\frac{x_{k+1}^{(n)}}{k+1}-\frac{x_k^{(n)}}{k}\right|<\frac{\eps}{2}.
\]

Then, since $x^{(n)}\in X$, $n\in\N$, we further have:
\begin{equation*}
\begin{aligned}
\forall\, K\ge K_0\ \forall\, n\in \N:\ \frac{\eps}{2}&>\sum_{k=K_0+1}^{\infty}\left|\frac{x_{k+1}^{(n)}}{k+1}-\frac{x_k^{(n)}}{k}\right|\ge \sum_{k=K+1}^{\infty}\left|\frac{x_{k+1}^{(n)}}{k+1}-\frac{x_k^{(n)}}{k}\right|\\
&\ge \sum_{k=K+1}^\infty \left(\frac{\left|x_k^{(n)}\right|}{k}-\frac{\left|x_{k+1}^{(n)}\right|}{k+1}\right)\\
&=\frac{\left|x_{K+1}^{(n)}\right|}{K+1}-\lim_{m\to \infty}\frac{\left|x_{m+1}^{(n)}\right|}{m+1}=\frac{\left|x_{K+1}^{(n)}\right|}{K+1},
\end{aligned}    
\end{equation*}
and hence, 
\[
\frac{\left|x_{K+1}^{(n)}\right|}{K+1}+\sum_{k=K+1}^{\infty}\left|\frac{x_{k+1}^{(n)}}{k+1}-\frac{x_k^{(n)}}{k}\right|<\frac{\eps}{2}+\frac{\eps}{2}=\eps.
\]

Therefore, condition $(2')$ is implied by condition $(2)$ so that both conditions are equivalent.
\end{proof}

\begin{rem}\label{remCC}
Condition (2) can be equivalently restated as follows:
\[
\sup_{n\in \N}\sum_{k=K+1}^{\infty}\left|\frac{x_{k+1}^{(n)}}{k+1}-\frac{x_k^{(n)}}{k}\right|\to 0,\ K\to \infty.
\]
\end{rem}

\section{Bounded Weighted Backward Shifts}

\begin{lem}[Norm Identities]\label{NI}\ \\
On the space $(X,\|\cdot\|)$, the backward shift
\begin{equation}\label{A}
X \ni x:=\left(x_k\right)_{k\in \N} \mapsto Ax:= \left(x_{k+1}\right)_{k\in \N} \in X
\end{equation}
is a bounded linear operator subject to the following norm identities:
\[
\left\|A^n\right\|= n+1,\ n\in \N.
\]

In particular, $\|A\|=2$.
\end{lem}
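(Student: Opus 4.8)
The plan is to compute $\|A^n\|$ exactly by pushing everything onto the difference sequence that defines the norm. Since $A$ is the backward shift, its $n$-th power is the shift by $n$, i.e.\ $A^nx=(x_{k+n})_{k\in\N}$, so that
\[
\|A^nx\|=\sum_{k=1}^\infty\left|\frac{x_{k+n+1}}{k+1}-\frac{x_{k+n}}{k}\right|.
\]
First I would record that $A^nx\in X$: the vanishing condition holds because $\frac{x_{k+n}}{k}=\frac{k+n}{k}\cdot\frac{x_{k+n}}{k+n}\to 0$ (as $x_j/j\to0$ and $(k+n)/k\to1$), while absolute summability will drop out of the upper bound below; linearity is immediate.

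The crux is an algebraic decomposition of the general summand in the language of the isometry $J$. Writing $a_j:=x_j/j$ and $z_j:=a_{j+1}-a_j=(Jx)_j$, so that $\|x\|=\sum_{j=1}^\infty|z_j|=\|Jx\|_1$, and using the elementary identity $\frac{k+n+1}{k+1}-\frac{k+n}{k}=-\frac{n}{k(k+1)}$ together with $a_{k+n}=-\sum_{i=k+n}^\infty z_i$ (valid since $a_j\to0$), I would split
\[
\frac{x_{k+n+1}}{k+1}-\frac{x_{k+n}}{k}=\frac{k+n+1}{k+1}\,z_{k+n}+\frac{n}{k(k+1)}\sum_{i=k+n}^\infty z_i.
\]
For the upper bound I would apply the triangle inequality to this identity, obtaining $\|A^nx\|\le S_1+S_2$ with $S_1=\sum_{k\ge1}\frac{k+n+1}{k+1}|z_{k+n}|$ and $S_2=\sum_{k\ge1}\frac{n}{k(k+1)}\sum_{i\ge k+n}|z_i|$. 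Reindexing $S_1$ via $i=k+n$ and interchanging the order of summation in $S_2$ (where $\sum_{k=1}^{i-n}\frac{1}{k(k+1)}=1-\frac{1}{i-n+1}$ telescopes), the coefficient of each $|z_i|$, $i\ge n+1$, collapses to exactly
\[
\frac{i+1}{i-n+1}+n\left(1-\frac{1}{i-n+1}\right)=n+1,
\]
so that $\|A^nx\|\le(n+1)\sum_{i=n+1}^\infty|z_i|\le(n+1)\|x\|$. This simultaneously establishes boundedness of $A^n$.

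For sharpness I would exhibit a single extremal vector. The two inequalities above become equalities precisely when $z_i=0$ for $i\le n$ and the surviving terms are aligned in phase, which points to $x:=J^{-1}e_{n+1}=(-1,-2,\dots,-(n+1),0,0,\dots)$, for which $z=Jx=e_{n+1}$. A direct check gives $A^nx=(-(n+1),0,0,\dots)$, whence $\|A^nx\|=n+1$ while $\|x\|=1$; thus the bound is attained and $\|A^n\|=n+1$, in particular $\|A\|=2$. The main obstacle is finding the decomposition in the second paragraph and recognizing that the mixed coefficients telescope to the constant $n+1$ independent of $i$; a secondary subtlety is that the \emph{standard} basis vectors $e_m$ of $X$ do \emph{not} realize the norm (they yield ratio at most $(n+2)/2<n+1$), so the extremizer must be drawn from $J^{-1}(\{e_m\}_{m\in\N})$. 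Alternatively, the computation can be packaged by conjugating to $l_1$: since $J$ is an isometric isomorphism, $\|A^n\|=\|JA^nJ^{-1}\|$, and for an operator on $l_1$ the norm equals $\sup_m\|JA^nJ^{-1}e_m\|_1$, which the same algebra evaluates to $n+1$.
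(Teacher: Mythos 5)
Your proof is correct, and it reaches both the upper bound and the sharpness by a route that differs in execution from the paper's. The paper introduces the unweighted shift $\hat{A}$ on $bv_0$ with $\|\hat{A}\|\le 1$, derives the operator-level representation $A^n=J_1^{-1}\hat{A}^nJ_1+n\hat{A}^nJ_1$ from the identity $(k+n)y_{k+n}=ky_{k+n}+ny_{k+n}$, and then chains the triangle inequality with the embedding estimate $\|y\|\le\|y\|_0$ for $bv_0\hookrightarrow X$ to get $\|A^nx\|\le(n+1)\|x\|$; you instead decompose each summand of $\|A^nx\|$ in the $l_1$-coordinates $z=Jx$ via $\frac{k+n+1}{k+1}-\frac{k+n}{k}=-\frac{n}{k(k+1)}$ and $a_{k+n}=-\sum_{i\ge k+n}z_i$, then interchange the order of summation and observe that the coefficient of each $|z_i|$ telescopes to exactly $n+1$. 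The two arguments hinge on the same elementary splitting (shift of $x_k/k$ plus an $n$-fold correction), but yours is a self-contained coefficient count that bypasses the $bv_0$ machinery and the embedding estimates, at the cost of a Fubini-type interchange (harmless here since all terms are nonnegative); the paper's version buys the reusable representation $A^n=J_1^{-1}\hat{A}^nJ_1+n\hat{A}^nJ_1$, which it records as a remark afterwards. Your extremal vector $J^{-1}e_{n+1}=(-1,-2,\dots,-(n+1),0,\dots)$ is, up to sign, exactly the paper's witness $x^{(n)}=(1,2,\dots,n+1,0,\dots)$, and your side remark that the standard basis vectors only achieve ratio $(n+2)/2$ is accurate and explains why the extremizer must be sought among the $J^{-1}e_m$.
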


\begin{proof}
Let
\begin{equation}\label{bsbv0}
bv_0\ni y:=(y_k)_{k\in \N}\mapsto \hat{A}y:=(y_{k+1})_{k\in \N}\in bv_0.
\end{equation}

The fact that the mapping $\hat{A}$ is well defined and \textit{linear} on $bv_0$ is obvious.

The linear operator $\hat{A}$ is also \textit{bounded} since
\[
\forall\, y:=(y_k)_{k\in \N}:\ \sum_{k=1}^\infty|y_{k+2}-y_{k+1}|=\sum_{k=2}^\infty|y_{k+1}-y_k|
\le \sum_{k=1}^\infty|y_{k+1}-y_k|=\|y\|_0.
\]

In particular, we infer
\begin{equation}\label{0norm}
\left\|\hat{A}\right\|\le 1.
\end{equation}

Let $x:=(x_k)_{k\in \N}\in X$ be arbitrary. 

By the \emph{Isometric Isomorphisms Proposition} (Proposition \ref{IIP}),
\begin{equation}\label{Xbv0}
\exists\, y:=(y_k)_{k\in \N}\in bv_0:\ x=J_1^{-1}y:=(ky_k)_{k\in \N}
\end{equation}
($J_1:X\to bv_0$ is the corresponding \emph{isometric isomorphism}).

Hence, for any $n\in \N$,
\begin{multline}\label{Xrep}
(x_{k+n})_{k\in \N}
\hfill\text{by \eqref{Xbv0};}
\\
\shoveleft{
=\left((k+n)y_{k+n}\right)_{k\in \N}=(ky_{k+n})_{k\in \N}+n(y_{k+n})_{k\in \N}
}
\\
\hfill\text{by the \emph{Isometric Isomorphisms Proposition} (Propostion \ref{IIP})
and \eqref{bsbv0};}
\\
\shoveleft{
=J_1^{-1} \hat{A}^ny+n \hat{A}^ny
\hfill\text{since $n \hat{A}^ny\in bv_0\subset X$}
}
\\
\hfill\text{by the \emph{Continuous and Dense Embeddings Proposition} (Proposition \ref{CDE})}
\\
\hfill\text{and \eqref{Xbv0};}
\\
\shoveleft{
=J_1^{-1} \hat{A}^nJ_1x+n \hat{A}^nJ_1x\in X.
}
\hfill
\end{multline}

Hence, the operator $A$ is well defined on $X$ by \eqref{A} and is, obviously, \textit{linear}.

Let $x\in X$ with $\|x\|=1$ be arbitrary. 

For any $n\in \N$, by \eqref{Xrep},
\begin{multline*}
\left\|A^nx\right\|=\left\|J_1^{-1} \hat{A}^nJ_1x+n \hat{A}^nJ_1x\right\|\le \left\|J_1^{-1} \hat{A}^nJ_1x\right\|+n\left\| \hat{A}^nJ_1x\right\|
\\
\hfill\text{by the \emph{Continuous and Dense Embeddings Proposition} (Proposition \ref{CDE}),}
\\
\hfill\text{estimates \eqref{DCE5};}
\\
\shoveleft{
\le\left\|J_1^{-1} \hat{A}^nJ_1x\right\|+n\left\| \hat{A}^nJ_1x\right\|_0
}
\\
\hfill\text{by the \emph{Isometric Isomorphisms Proposition} (Proposition \ref{IIP})}
\\
\shoveleft{
=\left\| \hat{A}^nJ_1x\right\|_0+n\left\| \hat{A}^nJ_1x\right\|_0
=\left(n+1\right)\left\| \hat{A}^nJ_1x\right\|_0
\le \left(n+1\right){\left\| \hat{A}\right\|}^n\left\|J_1x\right\|_0
}
\\
\hfill\text{by estimate \eqref{0norm};}
\\
\shoveleft{
\le \left(n+1\right)\left\|J_1x\right\|_0
}
\\
\hfill\text{by the \emph{Isometric Isomorphisms Proposition} (Proposition \ref{IIP})}
\\
\shoveleft{
=\left(n+1\right)\|x\|=n+1.
}
\hfill
\end{multline*}

Therefore, $A$ is a \textit{bounded} linear operator on $(X,\|\cdot\|)$ and
\[
\left\|A^n\right\|\le n+1,\ n\in\N.
\]

Let $n\in \N$ be arbitrary and
\[
x_k^{(n)}:=\begin{cases}
k,& k\le n+1,\\
0,& k>n+1,
\end{cases}\ k\in \N.
\]

Then, in view of the \emph{Continuous and Dense Embeddings Proposition} (Proposition \ref{CDE}),
\[
x^{(n)}:=\left(x_k^{(n)}\right)_{k\in \N}\in c_{00}\subset X.
\]

Since
\[
\left\|x^{(n)}\right\|=\sum_{k=1}^{n}\left|\frac{k+1}{k+1}-\frac{k}{k}\right|+\left|\frac{0}{n+2}-\frac{n+1}{n+1}\right|=1
\]
and 
\[
\left\|A^nx^{(n)}\right\|=\left\|\left(x_{k+n}^{(n)}\right)_{k\in \N}\right\|=\left|\frac{0}{2}-\frac{n+1}{1}\right|=n+1,
\]
we infer that
\[
n+1\le \left\|A^n\right\|,\ n\in\N.
\]

Thus, we conclude that
\[
 \left\|A^n\right\|=n+1,\ n\in\N.
\]

\end{proof}
\begin{rems}\ 
\begin{itemize}
\item By the prior lemma, we conclude that the space $(X,\|\cdot\|) $ is \textit{shift invariant}. 
\item As follows from the proof of the prior lemma, for the backward shift
\begin{equation*}
X \ni x:=\left(x_k\right)_{k\in \N} \mapsto Ax:= \left(x_{k+1}\right)_{k\in \N} \in X
\end{equation*}
on the space $(X,\|\cdot\|)$, the following representations
\[
\forall\, n\in \N:\ A^n=J_1^{-1} \hat{A}^nJ_1+n \hat{A}^nJ_1
\]
hold.
\end{itemize}
\end{rems}

\begin{thm}[Bounded Weighted Backward Shifts]\label{BWBS}\ \\
Let $w\in \F$, the bounded linear weighted backward shift operator 
\[
X\ni x:=\left(x_k\right)_{k\in \N}\mapsto A_wx:=w\left(x_{k+1}\right)_{k\in \N}\in X
\]
on the space $(X,\|\cdot\|)$ is 
\begin{enumerate}[label=(\arabic*)]
\item nonhypercyclic for $|w|<1$, 
\item hypercyclic but not chaotic for $|w|=1$, and
\item chaotic along with every power $A_w^n$, $n\in \N$, for $|w|>1$.
\end{enumerate}

Provided the space $(X,\|\cdot\|)$ is complex (i.e., $\F=\C$),
\[
\sigma\left(A_w\right)=\left\{ \lambda\in \C \,\middle|\, |\lambda|\le |w| \right\}
\]
with
\[
\sigma_p(A_w)=\left\{ \lambda\in \C \,\middle|\, |\lambda|< |w| \right\}\cup \left\{w\right\}
\quad
\text{and}
\quad
\sigma_c(A_w)=\left\{ \lambda\in \C \,\middle|\, |\lambda|=|w| \right\}\setminus \left\{w\right\}.
\]
\end{thm}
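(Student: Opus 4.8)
The plan is to separate the dynamical assertions (1)--(3) from the spectral description, reducing the former to the norm identities $\|A^n\|=n+1$ of the \emph{Norm Identities} Lemma (Lemma~\ref{NI}) together with the sufficient conditions of Theorems~\ref{SCH} and~\ref{SCC}, and the latter to the elementary fact that every eigenvector of a backward shift is a geometric sequence, whose membership in $X$ is governed by the \emph{Exponential Sequences} example (Example~\ref{exps}). Throughout I would write $A_w=wA$, so that $\|A_w^n\|=|w|^n(n+1)$, and would work with the dense subspace $Y:=c_{00}$ (dense by Remarks~\ref{remsIIP}) and the weighted forward shift $B$ defined by $Bx:=w^{-1}(0,x_1,x_2,\dots)$, which maps $c_{00}$ into itself and satisfies $A_wBx=x$.

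For (1), note that $|w|<1$ gives $\|A_w^n\|\to 0$, so every orbit converges to $0$ and is bounded; since a dense orbit in the infinite-dimensional $X$ is necessarily unbounded, $A_w$ is nonhypercyclic. For $|w|\ge 1$ I would verify the hypotheses of Theorems~\ref{SCH} and~\ref{SCC} on $Y$. For $x\in c_{00}$ one has $A_w^nx=0$ for all large $n$, while a direct computation gives $\|e_m\|=2/m$ for $m\ge 2$ and hence $\|B^ne_j\|=|w|^{-n}\tfrac{2}{j+n}$, which extends to all of $c_{00}$ by linearity. When $|w|=1$ this tends to $0$, verifying hypothesis~(2) of Theorem~\ref{SCH} and yielding hypercyclicity; when $|w|>1$ it is dominated by $2\alpha^n$ with $\alpha:=|w|^{-1}\in(0,1)$, and the finitely many nonzero norms $\|A_w^nx\|$ are absorbed into the constant $c$, verifying hypothesis~(2) of Theorem~\ref{SCC} and yielding chaoticity, with Corollary~\ref{CP} delivering chaoticity of each power $A_w^n$. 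This settles (3) and the hypercyclicity half of (2).

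The remaining, and most delicate, dynamical point is that $A_w$ fails to be chaotic when $|w|=1$; here I would show $\Per(A_w)$ is not dense. A period-$N$ point obeys $w^Nx_{k+N}=x_k$, i.e. $x_{k+N}=w^{-N}x_k$, so the differences $d_k:=x_{k+1}-x_k$ satisfy $|d_{k+N}|=|d_k|$, making $(|d_k|)_{k\in\N}$ $N$-periodic. Writing $\tfrac{x_{k+1}}{k+1}-\tfrac{x_k}{k}=\tfrac{d_k}{k+1}-\tfrac{x_k}{k(k+1)}$ and observing that the second term is absolutely summable (as $|x_k|$ is bounded), membership $x\in X$ forces $\sum_k\tfrac{|d_k|}{k+1}<\infty$, which for a periodic $(|d_k|)$ compels $d_k\equiv 0$. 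Hence every periodic point is a constant sequence, so $\Per(A_w)$ lies inside the one-dimensional span of $(1,1,1,\dots)$ and cannot be dense in the infinite-dimensional $X$; thus $A_w$ is hypercyclic but not chaotic.

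For the spectrum (with $\F=\C$) I would solve $A_wx=\lambda x$: the recurrence $wx_{k+1}=\lambda x_k$ forces the geometric form $x_k=x_1(\lambda/w)^{k-1}$, which by Example~\ref{exps} lies in $X$ exactly when $|\lambda/w|<1$ or $\lambda/w=1$, giving $\sigma_p(A_w)=\{\lambda:|\lambda|<|w|\}\cup\{w\}$. Since $\|A_w^n\|^{1/n}=|w|(n+1)^{1/n}\to|w|$, the spectral radius is $|w|$, so $\sigma(A_w)\subseteq\{|\lambda|\le|w|\}$; as $\sigma(A_w)$ is closed and contains the open disc $\sigma_p$, the reverse inclusion follows and $\sigma(A_w)=\{|\lambda|\le|w|\}$. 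I expect the main obstacle to be placing the punctured circle $\{|\lambda|=|w|\}\setminus\{w\}$ — on which $A_w-\lambda I$ is injective — into $\sigma_c$ rather than $\sigma_r$, i.e. proving $R(A_w-\lambda I)$ is dense even though the exact solution of $(A_w-\lambda I)x=e_m$ is a modulus-one geometric tail that does \emph{not} belong to $X$. I would resolve this by truncation: letting $x^{(n)}$ be that solution cut off beyond index $n$, so $x^{(n)}\in c_{00}\subset X$, a short computation yields
\[
(A_w-\lambda I)x^{(n)}=e_m-(\lambda/w)^{\,n-m}e_n,
\]
whence $\|(A_w-\lambda I)x^{(n)}-e_m\|=|\lambda/w|^{\,n-m}\|e_n\|=2/n\to 0$. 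Thus each $e_m$ lies in $\overline{R(A_w-\lambda I)}$; since these vectors span the dense subspace $c_{00}$ and the range closure is a subspace, $R(A_w-\lambda I)$ is dense and $\lambda\in\sigma_c(A_w)$. This forces $\sigma_r(A_w)=\emptyset$ and $\sigma_c(A_w)=\{|\lambda|=|w|\}\setminus\{w\}$, completing the description.
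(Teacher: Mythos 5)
Your proposal is correct, and parts (1)--(3) together with the computation of $\sigma_p$ and $\sigma$ follow essentially the paper's own route: the norm identities $\|A_w^n\|=|w|^n(n+1)$ for nonhypercyclicity when $|w|<1$, verification of Theorems~\ref{SCH} and~\ref{SCC} on $Y=c_{00}$ with the forward shift $B$ (your bound $\|B^ne_j\|=|w|^{-n}\tfrac{2}{j+n}$ ``extends by linearity'' should really say ``by the triangle inequality,'' but the estimate is the same as the paper's $\|B_w^nx\|\le\tfrac{2}{n+1}\|x\|_1$), and the Gelfand spectral radius plus closedness of $\overline{\sigma_p}$ for $\sigma(A_w)$. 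Your argument that $\Per(A_w)\subseteq\spa\{(1,1,1,\dots)\}$ is a mild variant of the paper's: you decompose $\tfrac{x_{k+1}}{k+1}-\tfrac{x_k}{k}=\tfrac{d_k}{k+1}-\tfrac{x_k}{k(k+1)}$ and use $N$-periodicity of $(|d_k|)$ to force $d_k\equiv 0$, whereas the paper fixes one index $j$ with $x_j\ne x_{j+1}$ and derives divergence of the sub-series along the progression $j+kN$ by the comparison test; the conclusions and difficulty are identical. The one genuinely different step is the identification of $\sigma_c(A_w)$ on the punctured circle: the paper simply invokes the external fact that hypercyclic operators have empty residual spectrum (citing Proposition~4.1 of the reference on sufficient and necessary conditions for linear chaos), while you prove dense range directly by truncating the formal solution of $(A_w-\lambda I)x=e_m$ and computing $\|(A_w-\lambda I)x^{(n)}-e_m\|=2/n\to 0$. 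Your version is self-contained and more elementary (it does not even use hypercyclicity of $A_w$), at the cost of an extra computation; the paper's is shorter but leans on an outside result. Both are valid, and your truncation identity and norm evaluation check out.
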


\begin{proof}
The fact that $A_w=wA$, where 
\[
X \ni x:=\left(x_k\right)_{k\in \N} \mapsto Ax:= \left(x_{k+1}\right)_{k\in \N} \in X
\]
is the unweighted backward shift, is a \textit{bounded linear operator} on $X$ for arbitrary $w\in \F$ follows from the \emph{Norm Identities Lemma} (Lemma \ref{NI}) by which
\begin{equation}\label{norm}
\forall\, n\in \N: \ \left\|A_w^n\right\|=\left\|w^nA^n\right\|=|w|^n\left\|A^n\right\|= |w|^n\left(n+1\right).
\end{equation}

With this in mind, for the following below consider $w\in \F$.
\begin{enumerate}[label=(\arabic*)]
\item Let $|w|<1 $.

By identities \eqref{norm}, we infer that
\[
\forall\, x\in X:\ 0\le \lim_{n\to \infty} \left\|A_w^nx\right\|\le \lim_{n\to \infty}|w|^n\left(n+1\right)\|x\|=0.
\]

Therefore, $A_w$ is \textit{not} hypercyclic. 
\item Let $|w|=1$.

In view of $A_w$ being a bounded linear operator on $X$, for every $n\in \N$, $A_w^n$ is a bounded linear operator on $X$, and hence, by \emph{Characterization of Closedness for Bounded Linear Operators} (see {\cite{Markin2020EOT,Markin2018EFA}}), is also a \textit{closed linear operator}.

Since $w\ne 0$, let
\[
c_{00}\ni x:=\left(x_k\right)_{k\in \N}\mapsto B_wx:=w^{-1}\left(x_{k-1}\right)_{k\in \N}, 
\]
with $x_0:=0 $ and the linearity being obvious.

Inductively, 
\[
\forall\, x:=(x_k)_{k\in \N}\in c_{00}\ \forall\, n\in \N: \ B_w^nx=w^{-n}\left(x_{k-n}\right)_{k\in \N}\in c_{00}\subset X
\]
($x_{k-n}:=0, \ k=1,\dots, n$).

Furthermore, for arbitrary $x:=(x_k)_{k\in \N}\in c_{00}\subset l_1\subset X$, we have:
\[
\forall\, n\in \N:\ \left(\frac{x_{k-n}}{k}\right)_{k\in \N}\in c_{00}\subset l_1\subset X
\] 
and
\begin{multline*}
\left\|B_w^nx\right\|=\left\|w^{-n}\left(x_{k-n}\right)_{k\in \N}\right\|=|w|^{-n}\left\|\left(x_{k-n}\right)_{k\in \N}\right\|
\\
\hfill\text{since $|w|=1$;}
\\
\shoveleft{
=\left\|\left(x_{k-n}\right)_{k\in \N}\right\|
}\\
\hfill\text{by the \emph{Isometric Isomorphisms Proposition} (Proposition \ref{IIP}) ;}
\\
\shoveleft{
=\left\|\left(\frac{x_{k-n}}{k}\right)_{k\in \N}\right\|_0
}\\
\hfill\text{by the \textit{Continuous and Dense Embeddings Proposition} (Proposition \ref{CDE}),}
\\
\hfill\text{estimates \eqref{DCE5};}
\\
\shoveleft{
\le 2\left\|\left(\frac{x_{k-n}}{k}\right)_{k\in \N}\right\|_1=2\left\|\left(\frac{x_{k}}{k+n}\right)_{k\in \N}\right\|_1\le \frac{2}{n+1}\|x\|_1\to 0,\ n\to \infty.
}
\hfill
\end{multline*}

Considering $x\in c_{00}$, it is clear that 
\[
A_w^nx\to 0,\ n\to \infty.
\]

Therefore,
\begin{equation*}\label{HBS2}
\forall\, x\in c_{00}: \ A_w^nx, B_w^nx\to 0 , \ n\to \infty.
\end{equation*}
i.e., the second condition of the \emph{Sufficient Condition for Hypercyclicity} (Theorem \ref{SCH}) is met. 

On the other hand, the first condition is clearly met. i.e.,
\begin{equation*}\label{HBS3}
\forall\, x:=(x_k)_{k\in \N}\in c_{00}:\ A_wB_wx=w\left((B_wx)_{k+1}\right)_{k\in \N}=(x_k)_{k\in \N}:=x.
\end{equation*}

Furthermore, the following is clear:
\begin{equation*}\label{HBS4}
c_{00}\subseteq C^\infty(A):=\bigcap_{n=1}^\infty D\left(A^n\right)\subseteq X.
\end{equation*}
Therefore, since $c_{00}$ is dense in the space $(X,\|\cdot\|)$ (see Remarks \ref{remsIIP} and Proposition \ref{CDE}), by the \emph{Sufficient Condition for Hypercyclicity} (Theorem \ref{SCH}), $A_w$ is \textit{hypercyclic}.

In \cite{Grosse-Erdmann2000}, it is shown that the only periodic points  of $A$ are \textit{constant sequences}. 

Here, we generalize the proof and obtain the following:
\begin{equation}\label{inclconst}
\Per(A_w)\subseteq \spa\{(1,1,1,\dots)\}.
\end{equation}

Indeed, by way of contradiction, suppose 
\[
\exists\, x:=(x_k)_{k\in \N}\in \Per(A_w)\backslash \spa\{(1,1,1,\dots)\}.
\]

Then, 
\[
\exists\, N\in \N:\ x=A_w^Nx=w^N(x_{k+N})_{k\in \N}
\]
and
\[
\exists\, j\in \N:\ x_j\ne x_{j+1}.
\]

Therefore, 
\[
\forall\, k\in \N:\ w^{kN}x_{j+kN}=x_j\ne x_{j+1}=w^{kN}x_{j+1+kN}
\]
and hence, in view of $|w|=1$, 
\begin{align*}
\infty>\|x\|&\ge \sum_{k=1}^\infty\left|\frac{x_{j+1+kN}}{j+1+kN}-\frac{x_{j+kN}}{j+kN}\right|=\sum_{k=1}^\infty\left|\frac{x_{j+1}w^{-kN}}{j+1+kN}-\frac{x_jw^{-kN}}{j+kN}\right|\\
&=\sum_{k=1}^\infty{\left|w\right|}^{-kN}\left|\frac{x_{j+1}}{j+1+kN}-\frac{x_j}{j+kN}\right|\\
&=\sum_{k=1}^\infty\left|\frac{x_{j+1}}{j+1+kN}-\frac{x_j}{j+kN}\right|\\
&=\sum_{k=1}^\infty\left|\frac{x_{j+1}}{j+1+kN}-\frac{x_j}{j+1+kN}+\frac{x_j}{j+1+kN}-\frac{x_j}{j+kN}\right|\\
&=\sum_{k=1}^\infty\left|\frac{x_{j+1}-x_j}{j+1+kN}-\frac{x_j}{(j+kN)(j+1+kN)}\right|.
\end{align*}

However, since $x_j\ne x_{j+1}$,
\[
\lim_{k\to \infty}\frac{\left|\frac{x_{j+1}-x_j}{j+1+kN}\right|}{\left|\frac{x_{j+1}-x_j}{j+1+kN}-\frac{x_j}{(j+kN)(j+1+kN)}\right|}=1
\]
so that, by the \emph{Comparison Test},
\[
\sum_{k=1}^\infty\left|\frac{x_{j+1}-x_j}{j+1+kN}-\frac{x_j}{(j+kN)(j+1+kN)}\right|=\infty.
\]

The obtained contradiction implies inclusion \eqref{inclconst}, which, in view of the fact that
the one-dimensional subspace $\spa\left\{(1,1,1,\dots)\right\}$ of \textit{constant sequences} is \textit{nowhere dense} in the infinite-dimensional space $(X,\|\cdot\|)$ (see, e.g., \cite{Markin2018EFA,Markin2020EOT}), implies that
\[
\bar{\Per(A_w)}\neq X.
\]

Whence, we conclude that the operator $A_w$ is \textit{not} chaotic. 
\item Let $|w|>1$. 

Similar to the case of $|w|=1$, it is clear that $A_w^n$ is a \textit{closed linear operator} for any $n\in \N$.

Further, since $w\ne 0$, let
\[
c_{00}\ni x:=(x_k)_{k\in \N}\mapsto B_wx:= w^{-1}\left(x_{k-1}\right)_{k\in \N}\in c_{00}\subset X
\]

with $x_0:=0$. 

Inductively, 
\[
\forall\, x:=(x_k)_{k\in \N}\in c_{00}\ \forall\, n\in \N: \ B_w^nx=w^{-n}\left(x_{k-n}\right)_{k\in \N}\in c_{00}\subset X
\]
($x_{k-n}:=0$, $k=1,\dots,n$).

Hence, for arbitrary $x:=(x_k)_{k\in \N}\in c_{00}\subset l_1\subset X$ and $n\in \N$,
\begin{multline*}
\left\|B_w^nx\right\|=\left\|w^{-n}\left(x_{k-n}\right)_{k\in \N}\right\|=|w|^{-n}\left\|\left(x_{k-n}\right)_{k\in \N}\right\|
\\
\hfill\text{by the \textit{Continuous and Dense Embeddings Proposition} (Proposition \ref{CDE}),}
\\
\hfill\text{estimate \eqref{DCE6};}
\\
\ \ \
\le|w|^{-n}\left\|\left(x_{k-n}\right)_{k\in \N}\right\|_1=|w|^{-n}\|x\|_1.
\hfill
\end{multline*}

In view of $\|x\|_1<\infty$ being fixed, $\left(A_w^nx\right)_{n\in \N}$ eventually zero, and $|w|>1$, 
\begin{align*}
\forall\, x\in c_{00}\ &\exists \alpha(x)\in \left(|w|^{-1},1\right)\subset (0,1), \ \exists\, c=c(x,\alpha)>\|x\|_1\ge 0 \ \forall\, n\in \N:\\
&\max\left(\left\|A_w^nx\right\|, \left\|B_w^nx\right\|\right)\le \max\left(\left\|A_w^nx\right\|, |w|^{-n}\|x\|_1\right)\le c\alpha^n.
\end{align*}
i.e., the second condition of the \textit{Sufficient Condition for Linear Chaos} (Theorem \ref{SCC}) is met.

Furthermore, 
\begin{equation*}\label{WBS3}
\forall\, x:=(x_k)_{k\in \N}\in c_{00}:\ A_wB_wx=w\left((B_wx)_{k+1}\right)_{k\in \N}=(x_k)_{k\in \N}:=x.
\end{equation*}

i.e., the first condition of the \emph{Sufficient Condition for Linear Chaos} (Theorem \ref{SCC}) is also met.

Lastly, the space $c_{00}\subset X$ is a dense subspace satisfying 
\begin{equation*}\label{WBS4}
c_{00}\subseteq C^{\infty}\left(A_w\right):=\bigcap_{n=1}^\infty D\left(A_w^n\right)\subseteq X.    
\end{equation*}

Hence, by the \emph{Sufficient Condition for Linear Chaos} (Theorem \ref{SCC}) and the \textit{Chaoticity of Powers Corollary} (Corollary \ref{CP}), $A_w$ is a \textit{chaotic} linear operator as well as its every power $A_w^n$ ($n\in \N$).
\end{enumerate}

Let us now analyze the spectral structure of $A$.

For this purpose, suppose that the space $(X,\|\cdot\|)$ is complex.

Then, for $\lambda\in \C\backslash \{0\}$, by the \emph{Exponential Sequences Example} (Example \ref{exps}), 
\begin{align*}
A-\lambda I \ \text{is not injective}& \iff \exists\, x:=(x_k)_{k\in \N}\in X\backslash \{(0,0,0,\dots)\}: \ (A-\lambda I)x=0\\
&\iff \exists\, x:=(x_k)_{k\in \N}\in X\backslash \{(0,0,0,\dots)\}: \ Ax=\lambda x\\
&\iff \exists\, x:=(x_k)_{k\in \N}\in X\backslash \{(0,0,0,\dots)\}: \ x_{k+1}=\lambda x_k\\
& \iff \exists\, x_1\in \C: x_1\left(\lambda^{k-1}\right)_{k\in \N}\in X\backslash \{(0,0,0,\dots)\}\\
& \iff \left(\lambda^{k-1}\right)_{k\in \N}\in X\backslash \{(0,0,0,\dots)\}\\
&\iff \left(\lambda^{k}\right)_{k\in \N}=\lambda \left(\lambda^{k-1}\right)_{k\in \N}\in X\backslash \{(0,0,0,\dots)\}\\
&\iff 0<|\lambda|<1 \quad \text{or}\quad \lambda=1.
\end{align*}

Furthermore, for the case of $\lambda=0$, it is clear that $A$ is \textit{not} injective since for $e _1:=\left(\delta_{1k}\right)_{k\in \N}\in X\backslash \{(0,0,0,\dots)\}$,
\[
Ae_1=(0,0,0,\dots).
\]

Hence,
\begin{equation}\label{pspec}
\sigma_p(A)=\left\{ \lambda\in \C \,\middle|\, |\lambda|< 1 \right\}\cup \left\{1\right\}.
\end{equation}

In view of $A$ being a closed linear operator, $\sigma(A)\subseteq \C$ is a \textit{closed} subset (see, e.g., \cite{Markin2020EOT}) so that
\begin{equation}\label{specinc}
\left\{ \lambda\in \C \,\middle|\, |\lambda|\le 1 \right\}=\overline{\sigma_p(A)} \subseteq \sigma(A).
\end{equation}

Furthermore, since $A$ is a bounded linear operator on the complex Banach space $(X,\|\cdot\|)$, by \emph{Gelfand's Spectral Radius Theorem} (see, e.g., \cite{Markin2020EOT}), considering the \emph{Norm Identities Lemma} (Lemma \ref{NI}),
\begin{equation}\label{gelf}
\begin{aligned}
\max_{\lambda\in \sigma(A)}|\lambda|&=\lim_{n\to \infty}\left\|A^n\right\|^\frac{1}{n}= \lim_{n\to \infty}{\left(n+1\right)}^{\frac{1}{n}}=\lim_{n\to \infty}e^{\frac{\ln\left(n+1\right)}{n}}=e^0=1.
\end{aligned}
\end{equation}

Therefore, in view of \eqref{specinc} and \eqref{gelf}, the following holds:
\begin{equation}\label{spec}
\sigma(A)=\left\{ \lambda\in \C \,\middle|\, |\lambda|\le 1\right\}.
\end{equation}

Since the operator $A$ is \textit{hypercyclic}, by {\cite[Proposition $4.1$]{arXiv:2106.14872}} 
\[
\sigma_r(A)=\emptyset.
\]

Therefore, since $ \sigma_p(A), \, \sigma_c(A),$ and $\sigma_r(A)$ partition $\sigma(A)$,
\begin{equation}\label{cspec}
\sigma_c(A)=\left\{ \lambda\in \C \,\middle|\, |\lambda|=1 \right\}\setminus \left\{1\right\}.
\end{equation}

Hence, provided the space $(X,\|\cdot\|)$ is complex, since $\sigma_p(A_0)=\sigma(A_0)=\{0\}$ is clear, by \eqref{pspec}, \eqref{spec}, and \eqref{cspec},

\begin{equation}\label{spec2}
\forall\, w\in \C:\ \sigma(A_w)=\left\{ \lambda\in \C \,\middle|\, |\lambda|\le |w|\right\}
\end{equation}
with
\begin{equation}\label{pspec2}
\sigma_p(A_w)=\left\{ \lambda\in \C \,\middle|\, |\lambda|< |w| \right\}\cup \left\{w\right\} 
\end{equation}
and
\[
\sigma_c(A_w)= \left\{ \lambda\in \C \,\middle|\, |\lambda|=|w| \right\}\setminus \left\{w\right\}.
\]
\end{proof}

\begin{rems}\ 
\begin{itemize}
\item For $w\in \F$ and $|w|<1$, when the space $(X,\|\cdot\|)$ is complex, by \eqref{spec2},
\[
\sigma(A_w)=\sigma(wA)=w\sigma(A)=\left\{\lambda\in \C \, \middle|\,|\lambda|\le |w|<1 \right\}.
\]

Hence, 
\[
\sigma(A_w)\cap \left\{\lambda\in \C\, \middle|\, |\lambda|= 1\right\}=\emptyset,
\]

so that, by Theorem \ref{KT} (see {\cite[Theorem $5.6$]{Grosse-Erdmann-Manguillot}}), $A_w$ is \textit{not} hypercyclic.
\item For $w\in \F$ and $|w|=1$, the fact that $A_w$ is \textit{not} chaotic coincides with $B_w$ not satisfying the second condition of the \emph{Sufficient Condition for Linear Chaos} (Theorem \ref{SCC}).

Indeed, for $e_1:=(1,0,0,0,\dots)\in c_{00}$, 
\begin{align*}
\left\|B_w^ne_1\right\|&=|w|^n\left\|B_1^ne_1\right\|=\left\|B_1^ne_1\right\|\\
&=\left|\frac{1}{n+1}-\frac{0}{n}\right|+\left|\frac{0}{n+2}-\frac{1}{n+1}\right|=\frac{2}{n+1}.
\end{align*}

However, 
\[
\forall\, \alpha\in (0,1)\ \forall\, c>0:\ \lim_{n\to \infty}\frac{\left\|B_w^ne_1\right\|}{c\alpha^n}=\frac{1}{c}\lim_{n\to \infty}\frac{2}{(n+1)\alpha^n}=\infty.
\]
\item For $w\in \F$ and $|w|=1$, 
\[
\Per(A_w)\subseteq \spa\{(1,1,1,\dots)\}.
\]

From this, the following holds:
\[
\Per(A_w)=\spa\{(1,1,1,\dots)\} \iff \exists\, N\in \N:\ w^N= 1
\]
and
\[
\Per(A_w)=\emptyset \iff \ \forall\, N\in \N:\ w^N\ne 1. 
\]
\item  For $w\in \F$ with $|w|=1$, when the space $(X,\|\cdot\|)$ is complex, by \eqref{pspec2}, 
\[
\sigma_p(A_w)\cap \left\{\lambda\in \C\, \middle|\, |\lambda|=1\right\}=\{w\}.
\]

Hence, $\sigma_p(A_w)$ doesn't contain infinitely many roots of unity so that, by Proposition \ref{P5.7} (see {\cite[Proposition $5.7$]{Grosse-Erdmann-Manguillot}}), $A_w$ is \textit{not} chaotic.

\end{itemize}
\end{rems}

\section{Unbounded Weighted Backward Shifts}

\begin{lem}\label{lem}\ \\
Let $w\in \F$ and $|w|>1$. Then, for the weighted backward shift operator
\[
A_wx:=\left(w^kx_{k+1}\right)_{k\in \N}
\]
in the space $(X,\|\cdot\|)$ with maximal domain
\[
D(A_w):=\left\{ x:=\left(x_k\right)_{k\in \N} \in X \,\middle|\, \left(w^kx_{k+1}\right)_{k\in \N}\in X \right\},
\]
each power 
\[
A_w^{n}x= \left( \left[ \prod_{j=k}^{k+n-1} w^j \right]x_{k+n} \right)_{k \in \N},\ n\in \N,
\]
with domain
\[
D(A_w^{n})= \left\{ x := \left(x_k \right)_{k \in \N} \in X \,\middle|\, \left( \left[ \prod_{j=k}^{k+n-1} w^j \right]x_{k+n} \right)_{k \in \N} \in X \right\}
\]
is a densely defined unbounded closed linear operator and the subspace
\[
C^\infty(A_w):=\bigcap_{n=1}^\infty D(A_w^n)
\]
is dense in $(X,\|\cdot\|)$.
\end{lem}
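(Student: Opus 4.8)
The plan is to treat the lemma in four stages: derive the closed form of the powers by induction, match the domain of the $n$th operator power with the displayed maximal domain, verify dense definition, unboundedness and closedness, and finally read off the density of $C^\infty(A_w)$.

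First I would establish
\[
A_w^n x=\left(\left[\prod_{j=k}^{k+n-1}w^j\right]x_{k+n}\right)_{k\in\N},\qquad n\in\N,
\]
by induction on $n$. The base case $n=1$ is the definition of $A_w$, and the inductive step follows by applying $A_w$ to $A_w^{n-1}x$ and telescoping the weights via $w^k\prod_{j=k+1}^{k+n-1}w^j=\prod_{j=k}^{k+n-1}w^j$. Writing $P_k:=\prod_{j=k}^{k+n-1}w^j=w^{\,nk+n(n-1)/2}$, note that $|P_k|=|w|^{\,nk+n(n-1)/2}\to\infty$ as $k\to\infty$; this growth is what ultimately yields unboundedness.

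The delicate point is identifying the domain. The inclusion $D(A_w^n)\subseteq\{x\in X:A_w^nx\in X\}$ is immediate from the formula, so it remains to prove the reverse inclusion, and this is the only place where the hypothesis $|w|>1$ (rather than mere boundedness of the weights) is genuinely used. Given $x\in X$ with $A_w^nx\in X$, the relation $(A_w^nx)_k=w^k(A_w^{n-1}x)_{k+1}$ yields
\[
\frac{(A_w^{n-1}x)_{k+1}}{k+1}=\frac{k}{k+1}\,w^{-k}\,\frac{(A_w^nx)_k}{k}.
\]
Since $A_w^nx\in X$ makes $\big(\tfrac{(A_w^nx)_k}{k}\big)_{k\in\N}$ bounded while $|w|^{-k}$ decays geometrically, the sequence $J_1(A_w^{n-1}x)=\big(\tfrac{(A_w^{n-1}x)_k}{k}\big)_{k\in\N}$ is absolutely summable, hence belongs to $l_1\subseteq bv_0$; by the \emph{Isometric Isomorphisms Proposition} (Proposition \ref{IIP}) and the \emph{Continuous and Dense Embeddings Proposition} (Proposition \ref{CDE}) this forces $A_w^{n-1}x\in X$. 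Applying this descent successively places every intermediate iterate in $X$, so $x\in D(A_w^n)$ and the two domains coincide.

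It then remains to verify the three properties. Dense definition and unboundedness are routine: $A_w^n$ maps the subspace $c_{00}$ into $c_{00}\subseteq X$, so the dense (see Remarks \ref{remsIIP}) subspace $c_{00}$ lies in every $D(A_w^n)$; and since $A_w^ne_m=P_{m-n}\,e_{m-n}$ with $|P_{m-n}|\to\infty$ as $m\to\infty$, while $\|e_m\|=2/m$ for $m\ge2$, the unit vectors $e_m/\|e_m\|$ have images of unbounded norm. For closedness I would use the continuity of the coordinate functionals $c_k(x)=x_k$, which holds because $\{e_n\}_{n\in\N}$ is a Schauder basis for $X$ (Section \ref{secconv}): if $x^{(m)}\to x$ and $A_w^nx^{(m)}\to y$ in $X$, then for each fixed $k$ convergence of the $k$th coordinate gives both $(A_w^nx^{(m)})_k=P_kx_{k+n}^{(m)}\to P_kx_{k+n}$ and $(A_w^nx^{(m)})_k\to y_k$, whence $y_k=P_kx_{k+n}$ for all $k$; thus $A_w^nx=y\in X$, so $x\in D(A_w^n)$ and $A_w^nx=y$, i.e.\ $A_w^n$ is closed. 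Finally, $c_{00}\subseteq\bigcap_{n=1}^\infty D(A_w^n)=C^\infty(A_w)$ together with the density of $c_{00}$ in $X$ yields the density of $C^\infty(A_w)$. I expect the domain-matching step to be the main obstacle, as it is the one step that depends on the geometric smallness of $w^{-k}$ afforded by $|w|>1$.
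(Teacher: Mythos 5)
Your proof is correct and follows essentially the same route as the paper's: closed form of the powers by induction, dense definedness via $c_{00}$, unboundedness via the normalized basis vectors $\tfrac{m}{2}e_m$, and closedness via coordinatewise convergence of the Schauder coordinates (the paper invokes Corollary \ref{CC} for this last step). The only substantive difference is that you give a self-contained geometric-decay descent argument identifying $D(A_w^n)$ with the maximal domain of the displayed formula, a step the paper instead imports from \cite[Lemma 3.1]{arXiv:2203.02032}; that argument of yours is sound.
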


\begin{proof}
Let $w\in \F$ with $|w|>1$ be arbitrary.

Similarly to \cite[Lemma $3.1$]{arXiv:2203.02032},
\[
\forall\, n\in \N:\ A_w^nx =  \left( \left[ \prod_{j=k}^{k+n-1} w^j \right]x_{k+n} \right)_{k \in \N},  
\]
with domain
\[
D(A_w^{n})= \left\{ x := \left(x_k \right)_{k \in \N} \in X \,\middle|\, \left( \left[ \prod_{j=k}^{k+n-1} w^j \right]x_{k+n} \right)_{k \in \N} \in X \right\},
\]
where
\[
c_{00}\subseteq D(A_w^{n+1}) \subseteq D(A_w^n).
\]

Since $c_{00}\subset X$ is a dense subspace (see Remarks \ref{remsIIP} and Proposition \ref{CDE}), it also follows verbatim {\cite[Lemma $3.1$]{arXiv:2203.02032}}, that each power $A_w^n$ ($n\in \N$) is densely defined with
\[
C^\infty(A_w) := \bigcap_{n=1}^\infty D(A_w^n)
\] 
being \textit{dense} in the space $(X,\|\cdot\|)$.

Let $e_m:=\left(\delta_{mk} \right)_{k\in \N}\in c_{00}\subset X$, $m\in\N$. 

If $m\ge 2$, then
\begin{equation}\label{scaledsu}
\left\|\frac{m}{2}e_m\right\|=\frac{m}{2}\|e_m\|=\frac{m}{2}\left(\left|\frac{1}{m}-\frac{0}{m-1}\right|+\left|\frac{0}{m+1}-\frac{1}{m}\right|\right)=\frac{m}{2}\frac{2}{m}=1.
\end{equation}

For an arbitrary $n\in \N$, in view of $|w|>1$,
\begin{align*}
\forall\, m\ge 2:\ &\left\|A_w^n\frac{m+n}{2}e_{m+n}\right\| = \frac{m+n}{2}\left\|A_w^ne_{m+n}\right\|\\
&=\frac{m+n}{2}\left\| \left( \left[ \prod_{j=k}^{k+n-1} w^j \right] \delta_{(m+n)(k+n)} \right)_{k\in\N}  \right\|\\
& = \frac{m+n}{2}\left(\left|\frac{\displaystyle\prod_{j=m}^{m+n-1} w^j}{m}-\frac{0}{m-1}\right|+\left|\frac{0}{m+1}-\frac{\displaystyle\prod_{j=m}^{m+n-1} w^j}{m}\right|\right)\\
&= \frac{m+n}{m}\prod_{j=m}^{m+n-1} |w|^j\ge \prod_{j=m}^{m+n-1} |w|^j\ge |w|^{mn}\to \infty, \ m\to \infty.
\end{align*}

Therefore, in view of \eqref{scaledsu}, $A_w^n$  is \textit{unbounded}.
		
Let $n\in \N$ and $\left(x^{(m)}:=\left(x_k^{(m)} \right)_{k\in \N} \right)_{m\in \N}$ be a sequence in $D(A_w^n)$ such that
\[
x^{(m)} \to x:=\left(x_k \right)_{m\in \N}\in X,\ m \to \infty,
\]
and
\[
A_w^nx^{(m)} = \left[ \prod_{j=k}^{k+n-1} w^j \right]x_{k+n}^{(m)} \to y:=\left(y_k \right)_{k\in \N}\in X,\ m \to \infty.
\]

Then, by the \emph{Characterization of Convergence in $(X,\|\cdot\|) $ Corollary} (Corollary \ref{CC}), for any $k\in \N$,
\[
x_k^{(m)} \to x_k,\ m \to \infty,
\]
and 
\[
\left[ \prod_{j=k}^{k+n-1} w^j \right]x_{k+n}^{(m)} \to y_k, \ m \to \infty.
\]

Proceeding as in {\cite[Lemma $3.1$]{arXiv:2203.02032}}, we infer that, for each $k\in \N$,
\[
\left[ \prod_{j=k}^{k+n-1} w^j \right]x_{k+n} = y_k,
\]
which means that
\[
\left( \left[ \prod_{j=k}^{k+n-1} w^j \right]x_{k+n} \right)_{k \in \N}=y\in X.
\]
		
Whence, we conclude that $x \in D(A_w^n)$ and $y = A_w^nx$, which, by the \textit{Sequential Characterization of Closed Linear Operators} (see, e.g., \cite{Markin2018EFA,Markin2020EOT}), implies the operator $A_w^n$ is \textit{closed}.
\end{proof}

\begin{thm}[Unbounded Weighted Backward Shifts]\label{UWBS}\ \\
For an arbitrary $w\in \F$ with $|w|>1$, the unbounded linear weighted backward shift operator
\[
A_wx:=\left(w^kx_{k+1}\right)_{k\in \N}
\]
in the space $(X,\|\cdot\|)$ with maximal domain
\[
D(A_w):=\left\{ x:=\left(x_k\right)_{k\in \N} \in X \,\middle|\, \left(w^kx_{k+1}\right)_{k\in \N}\in X \right\}
\]
is chaotic as well as its every power $A_w^n$, $n\in \N$.

Furthermore, each $\lambda \in \F$ is an eigenvalue for $A_w$ of geometric multiplicity $1$, i.e.,
\[
\dim\ker(A_w-\lambda I)=1.
\]
In particular, provided the space $(X,\|\cdot\|)$ is complex,
\[
\sigma_p\left(A_w\right)=\C.
\]
\end{thm}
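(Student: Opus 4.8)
The plan is to treat the two assertions separately: chaoticity (together with chaoticity of all powers) via the \emph{Sufficient Condition for Linear Chaos} (Theorem \ref{SCC}), and the determination of the point spectrum by solving the eigenvalue recurrence in closed form. Lemma \ref{lem} has already supplied the standing hypotheses of Theorem \ref{SCC}: each power $A_w^n$ is a densely defined closed operator and $C^\infty(A_w)$ is dense in $X$. I would take the dense set $Y:=c_{00}$, which lies in $C^\infty(A_w)$ (every power $A_w^n$ maps a finitely supported sequence to a finitely supported one) and is dense in $(X,\|\cdot\|)$ by Remarks \ref{remsIIP} and Proposition \ref{CDE}.

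For the first hypothesis of Theorem \ref{SCC}, I would introduce the weighted forward shift $B_w x:=\left(w^{-(k-1)}x_{k-1}\right)_{k\in\N}$ with $x_0:=0$, which maps $c_{00}$ into itself and satisfies $A_wB_wx=x$ on $c_{00}$. For the quantitative second hypothesis, observe that for $x\in c_{00}$ the orbit $A_w^nx$ is the zero sequence once $n$ exceeds the support length, while iterating $B_w$ gives the explicit form $(B_w^nx)_k=w^{-(nk-n(n+1)/2)}x_{k-n}$. Reindexing by $m:=k-n$ and using the embedding estimate $\|\cdot\|\le\|\cdot\|_1$ from \eqref{DCE6}, one obtains $\|B_w^nx\|\le |w|^{-(n^2/2+n/2)}\|x\|_1$, which decays super-geometrically since $|w|>1$. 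Hence both $\|A_w^nx\|$ and $\|B_w^nx\|$ fit under an envelope $c\alpha^n$ for a suitable $\alpha\in(0,1)$ and $c>0$, so Theorem \ref{SCC} delivers chaoticity and the \emph{Chaoticity of Powers Corollary} (Corollary \ref{CP}) the chaoticity of every power $A_w^n$.

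For the point spectrum I would solve $A_wx=\lambda x$ componentwise. The relation $w^kx_{k+1}=\lambda x_k$ is a first-order recurrence with solution $x_k=\lambda^{k-1}w^{-k(k-1)/2}x_1$. When $\lambda=0$ this collapses to $x=x_1e_1\in c_{00}$; when $\lambda\neq 0$ the ratio of consecutive moduli is $|\lambda|\,|w|^{-k}\to 0$, so the candidate eigenvector lies in $l_1$ and therefore in $X$ by the continuous embedding $l_1\hookrightarrow X$ (Proposition \ref{CDE}). Since then $A_wx=\lambda x\in X$, membership in $D(A_w)$ is automatic, so every $\lambda\in\F$ is an eigenvalue; as the recurrence determines $x$ uniquely from $x_1$, the kernel is one-dimensional, i.e. $\dim\ker(A_w-\lambda I)=1$. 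In the complex setting this yields $\sigma_p(A_w)=\C$.

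The routine parts are checking that $B_w$ is a genuine right inverse on $c_{00}$ and that $A_w^nx$ eventually vanishes there. The main obstacle is the precise control of the two super-exponential quantities: one must verify that the quadratic-in-$n$ exponent arising from the product $\prod w^{-j}$ forces $\|B_w^nx\|$ below the geometric envelope $c\alpha^n$, and, dually, that the quadratic-in-$k$ decay $|w|^{-k(k-1)/2}$ of the eigenvector dominates its geometric growth $|\lambda|^{k-1}$ so that it genuinely lands in $l_1\subset X$. These estimates are the computational heart of the argument; everything else is bookkeeping with the isometric isomorphisms and embedding estimates established earlier.
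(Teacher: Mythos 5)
Your proposal is correct and follows essentially the same route as the paper: the same right inverse $B_w$ on the dense set $c_{00}$, the same appeal to Lemma \ref{lem}, Theorem \ref{SCC}, and Corollary \ref{CP} for chaoticity of $A_w$ and its powers, and the same closed-form solution $x_k=\lambda^{k-1}w^{-k(k-1)/2}x_1$ of the eigenvalue recurrence placed in $l_1\subset X$ via Proposition \ref{CDE}. The only (harmless) differences are cosmetic: you track the full quadratic exponent to get the super-geometric bound $|w|^{-(n^2+n)/2}\|x\|_1$ where the paper settles for the cruder $|w|^{-n}\|x\|_1$, and you observe that $A_wx=\lambda x\in X$ makes membership in $D(A_w)$ automatic where the paper verifies $(w^ky_{k+1})_{k\in\N}\in X$ by a separate computation.
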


\begin{proof}
Let $w\in \F$ with $|w|>1$ be arbitrary.

Consider the following mapping:
\[
c_{00}\ni x:=(x_k)_{k\in \N}\mapsto B_wx:=\left(w^{-(k-1)}x_{k-1}\right)_{k\in \N}\in c_{00}\quad (x_0:=0).
\]
For which, it is clear that $B_w$ is a well-defined linear operator such that
\begin{equation}\label{RI2}
A_wB_wx=x,\ x\in Y.
\end{equation}

i.e., the first condition of the \emph{Sufficient Condition for Linear Chaos} (Theorem \ref{SCC}) is met.

Notice
\[
X\ni x:=(x_k)_{k\in \N}\mapsto B_w^2x= \left( w^{-(k-1)} w^{-(k-2)} x_{k-2} \right)_{k \in \N}\quad (x_{k-2}:=0,\ k=1,2)
\]
and 
\[
X\ni x:=(x_k)_{k\in \N}\mapsto B^3x= \left( w^{-(k-1)} w^{-(k-2)} w^{-(k-3)} x_{k-3} \right)_{k \in \N}
\]
($x_{k-3}:=0,\ k=1,2,3$).

Inductively, for any $n\in \N$,
\[
X\ni x:=(x_k)_{k\in \N}\mapsto B^nx = \left(\left[ \prod_{j=1}^{n} w^{-(k-j)} \right] x_{k-n} \right)_{k \in \N}
\]
($x_{k-n}:=0,\ k=1,\dots,n$).

Hence, for arbitrary $x:=(x_k)_{k\in \N}\in c_{00}\subset l_1\subset X$ and $n\in \N$,
\begin{multline*}
\left\|B_w^nx\right\|=\left\| \left(\left[ \prod_{j=1}^{n} w^{-(k-j)} \right] x_{k-n} \right)_{k \in \N}\right\|
\\
\hfill\text{by the \textit{Continuous and Dense Embeddings Proposition} (Proposition \ref{CDE}),}
\\
\hfill\text{estimate \eqref{DCE6};}
\\
\shoveleft{
\le\left\| \left(\left[ \prod_{j=1}^{n} w^{-(k-j)} \right] x_{k-n} \right)_{k \in \N}\right\|_1=\left\| \left(\left[ \prod_{j=1}^{n} w^{-(k+n-j)} \right] x_{k} \right)_{k \in \N}\right\|_1}
\\
\hfill\text{since $|w|>1$;}
\\
\shoveleft{
\le \left\| \left(\left[ \prod_{j=1}^{n} w^{-1} \right] x_{k} \right)_{k \in \N}\right\|_1}=|w|^{-n}\|x\|_1.
\hfill
\end{multline*}

In view of $\|x\|_1<\infty$ being fixed, $\left(A_w^nx\right)_{n\in \N}$ eventually zero, and $|w|>1$, 
\begin{equation*}\label{UWBS1}
\begin{aligned}
\forall\, x\in c_{00}\ &\exists\, \alpha(x)\in \left(|w|^{-1},1\right)\subset (0,1), \ \exists\, c=c(x,\alpha)>\|x\|_1\ge 0 \ \forall\, n\in \N:\\
&\max\left(\left\|A_w^nx\right\|, \left\|B_w^nx\right\|\right)\le \max\left(\left\|A_w^nx\right\|, |w|^{-n}\|x\|_1\right)\le c\alpha^n.
\end{aligned}
\end{equation*}

i.e. the second condition of the \textit{Sufficient Condition for Linear Chaos} (Theorem \ref{SCC}) is also met.

By Lemma \ref{lem}, the \textit{Sufficient Condition for Linear Chaos} (Theorem \ref{SCC}), and the \textit{Chaoticity of Powers Corollary} (Corollary \ref{CP}), we conclude that the operator $A_w$ is \textit{chaotic} as well as its every power $A_w^n$ $(n\in \N)$. 

What follows almost verbatim mimics the reasoning of \cite[Theorem $3.1$]{arXiv:1811.06640} (also \cite[Theorem $3.2 $]{arXiv:2203.02032}).

For arbitrary $\lambda \in \F$ ($\F:=\R$ or $\F:=\C$) and $x:=(x_k)_{\N} \in D(A_w)$, the equation
\begin{equation}\label{ev}
A_wx=\lambda x
\end{equation}
is equivalent to
\[
(w^k x_{k+1})_{k\in \N}=\lambda(x_k)_{k\in \N},
\]
i.e.,
\[
w^k x_{k+1}=\lambda x_k,\ k\in \N
\]

Whence, we recursively infer that
\[
x_k=\left[\prod_{j=1}^{k-1}\frac{\lambda}{w^{k-j}}\right]x_1
=\dfrac{\lambda^{k-1}}{w^{\sum_{j=1}^{k-1}(k-j)}}x_1
=\dfrac{\lambda^{k-1}}{w^{\frac{k(k-1)}{2}}}x_1
=\left(\dfrac{\lambda}{w^{\frac{k}{2}}}\right)^{k-1}x_1,\
k\in \N,
\]
where for $\lambda=0$, $0^0:=1$.

Considering that $|w|>1$, for all sufficiently large $k\in \N$, we have:
\[
\left|\dfrac{\lambda}{w^{\frac{k}{2}}}\right|^{k-1}
=\left(\dfrac{|\lambda|}{|w|^{\frac{k}{2}}}\right)^{k-1}
\le \left(\dfrac{1}{2}\right)^{k-1},
\]
which implies, by the \textit{Comparison Test} and the \emph{Continuous and Dense Embeddings Proposition} (Proposition \ref{CDE}),
\[
y:=(y_k)_{k\in \N}:=\left(\left(\dfrac{\lambda}{w^{\frac{k}{2}}}\right)^{k-1}\right)_{k\in \N}\in l_1\subset X.
\]

Further, since
\[
w^ky_{k+1}=w^k\dfrac{\lambda^{k}}{w^{\sum_{j=1}^{k}(k+1-j)}}
=\dfrac{\lambda^{k}}{w^{\sum_{j=2}^{k}(k+1-j)}}=\dfrac{\lambda^{k}}{w^{\frac{k(k-1)}{2}}}x_1
=\left(\dfrac{\lambda}{w^{\frac{k-1}{2}}}\right)^{k},\ k\in \N,
\]
we similarly conclude that
\[
(w^ky_{k+1})_{k\in \N}\in X,
\]
and hence,
\[
y\in D(A_w)\setminus \{0\}.
\]

Thus, we have shown that, for any $\lambda \in \F$, 
\[
\ker(A_w-\lambda I)=\spa\left(\left\{y\right\}\right)\subseteq D(A_w),
\]
and hence,
\[
\dim\ker(A_w-\lambda I)=1,
\]
which completes the proof.
\end{proof}

\section{More Hypercyclicity and Linear Chaos}

Here, we discuss how the known chaos generates new chaos via the conjugacy relative the isometric isomorphism between the spaces $(X,\|\cdot\|)$ and $l_1$ (see Section \ref{secii}).

\begin{thm}[More Bounded Linear Chaos in $X$]\ \\
For $w\in \F$, the bounded linear operator
\[
X\ni x:=(x_k)_{k\in \N}\mapsto \hat{A}_wx:=w\left(\frac{k}{k+1}x_{k+1}\right)_{k\in \N}\in X
\]
on the space $(X,\|\cdot\|)$ is chaotic as well as its every power 
\[
X\ni x:=(x_k)_{k\in \N}\mapsto {\hat{A}_w}^nx=w^n\left(\frac{k}{k+n}x_{k+n}\right)_{k\in \N}\in X, \ n\in \N
\]
when $|w|>1$ and nonhypercyclic otherwise when $|w|\le 1$.

Provided the space $(X,\|\cdot\|)$ is complex
\[
\sigma\left(\hat{A}_w\right)=\left\{ \lambda\in \C \,\middle|\, |\lambda|\le |w| \right\}
\]
with
\[
\sigma_p\left(\hat{A}_w\right)=\left\{ \lambda\in \C \,\middle|\, |\lambda|<|w| \right\}\quad \text{and}\quad
\sigma_c\left(\hat{A}_w\right)=\left\{ \lambda\in \C \,\middle|\, |\lambda|=|w| \right\}.
\]
\end{thm}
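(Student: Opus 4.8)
The plan is to exploit the isometric isomorphism $J\colon X\to l_1$ from the \emph{Isometric Isomorphisms Proposition} (Proposition \ref{IIP}) to transfer the entire analysis to the classical space $l_1$, where $\hat A_w$ becomes the familiar Rolewicz weighted backward shift. The decisive first step is a direct computation showing that $\hat A_w$ is \emph{conjugate} via $J$ to the plain weighted backward shift on $l_1$. Writing $Jx=\left(\frac{x_{k+1}}{k+1}-\frac{x_k}{k}\right)_{k\in\N}$ and applying $J$ to $\hat A_w x$, the weight $\frac{k}{k+1}$ is precisely engineered so that the telescoping collapses and yields
\[
J\hat A_w x
= w\left(\frac{x_{k+2}}{k+2}-\frac{x_{k+1}}{k+1}\right)_{k\in\N}
= w\bigl((Jx)_{k+1}\bigr)_{k\in\N}
= wA_{l_1}Jx,
\]
where $A_{l_1}z:=(z_{k+1})_{k\in\N}$ is the unweighted backward shift on $l_1$. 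Thus $J\hat A_w = wA_{l_1}J$, i.e. $\hat A_w=J^{-1}(wA_{l_1})J$, and the claimed power formula $\hat A_w^n x=w^n\left(\frac{k}{k+n}x_{k+n}\right)_{k\in\N}$ follows either by induction or, more cleanly, from $\hat A_w^n=J^{-1}(w^nA_{l_1}^n)J$. Since $J$ is an isometric isomorphism and $\|A_{l_1}^n\|_1=1$, this already gives boundedness with $\|\hat A_w^n\|=|w|^n$.

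With the conjugacy in hand, every dynamical and spectral feature of $\hat A_w$ reads off from the corresponding feature of $wA_{l_1}$ on $l_1$, because topological conjugacy preserves hypercyclicity and chaoticity, while similarity through a bounded invertible operator preserves the spectrum together with its point/continuous/residual decomposition. For $|w|>1$ the operator $wA_{l_1}$ is exactly the classical Rolewicz operator, chaotic on $l_1$ (Examples \ref{exmpshlc}, item 1) and satisfying the hypotheses behind the \emph{Chaoticity of Powers Corollary} (Corollary \ref{CP}), so every power $(wA_{l_1})^n$ is chaotic as well; conjugating by $J$ transfers chaoticity to $\hat A_w$ and to each $\hat A_w^n$. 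For $|w|\le 1$ the norm bound $\|(wA_{l_1})^n\|_1=|w|^n\le 1$ forces every orbit to be bounded, so $wA_{l_1}$ — and hence $\hat A_w$ — cannot be hypercyclic.

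For the spectral statement I would compute the (textbook) spectral structure of $A_{l_1}$ on $l_1$ and then scale by $w$ via $\sigma(\hat A_w)=w\,\sigma(A_{l_1})$ and likewise for each spectral part. Solving $A_{l_1}z=\mu z$ gives the geometric eigenvector $(\mu^{k-1})_{k\in\N}$, which lies in $l_1$ exactly when $|\mu|<1$, so $\sigma_p(A_{l_1})=\{\mu\in\C\mid |\mu|<1\}$ — note the boundary is excluded, in sharp contrast with the backward shift on $X$ in Theorem \ref{BWBS}, where the constant sequence survives as an eigenvector at $\mu=1$. Gelfand's spectral radius formula with $\|A_{l_1}^n\|_1=1$ yields spectral radius $1$, and since $\sigma(A_{l_1})$ is closed and contains the open unit disk, $\sigma(A_{l_1})=\{|\mu|\le 1\}$. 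The residual spectrum is empty because the Banach-space adjoint $A_{l_1}^{*}$ is the forward shift on $l_\infty$, which has no eigenvalues, so $\sigma_r(A_{l_1})\subseteq\sigma_p(A_{l_1}^{*})=\emptyset$; this leaves $\sigma_c(A_{l_1})=\{|\mu|=1\}$. Scaling by $w$ gives the asserted disk, open disk, and circle for $\sigma(\hat A_w)$, $\sigma_p(\hat A_w)$, and $\sigma_c(\hat A_w)$ respectively.

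The main obstacle is entirely front-loaded into the first step: verifying the conjugacy $J\hat A_w=wA_{l_1}J$, since this is what makes $\hat A_w$ collapse to the \emph{unweighted} shift on $l_1$ rather than to some genuinely weighted shift. Once that algebraic identity is secured, the rest of the argument is a routine transfer of classical $l_1$ facts through the isometric isomorphism, the only subtlety being the bookkeeping that the residual spectrum vanishes so that the entire unit circle falls into the continuous spectrum.
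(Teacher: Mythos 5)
Your proposal is correct and follows essentially the same route as the paper: both hinge on the conjugacy $\hat A_w=J^{-1}(wA_{l_1})J$ through the isometric isomorphism $J:X\to l_1$ of Proposition \ref{IIP}, after which boundedness, the power formula, chaoticity/nonhypercyclicity, and the spectral decomposition are all transferred from the classical Rolewicz shift on $l_1$. The only differences are cosmetic — you verify the intertwining identity starting from the explicit formula for $\hat A_w$ whereas the paper defines $\hat A_w:=J^{-1}A_wJ$ and derives that formula, and you work out the $l_1$ spectral structure (including $\sigma_r=\emptyset$ via the adjoint) that the paper simply cites.
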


\begin{proof}
Let $w\in \F$ be arbitrary. 

The weighed backward shift
\[
l_1\ni z:=(z_k)_{k\in \N}\mapsto A_wz:=w(z_{k+1})_{k\in \N}\in l_1
\]
is a \textit{bounded linear operator} which is \emph{chaotic} \cite{Rolewicz} as well as its every power $\hat{A}_w^n$ ($n\in \N$) when $|w|>1$, being subject to the \textit{Sufficient Condition for Linear Chaos} (Theorem \ref{SCC}) (see \cite[Examples $3.1$]{arXiv:2106.14872}), and is \emph{nonhypercyclic} when $|w|\le 1$ since $\|A_w\|=|w|\le 1$ (see, e.g., \cite{Markin2020EOT}).

By the \emph{Isometric Isomorphisms Proposition} (Proposition \ref{IIP}), the mapping
\begin{equation}\label{ii1}
X\ni x:=(x_k)_{k\in \N}\mapsto Jx:=\left(\frac{x_{k+1}}{k+1}-\frac{x_k}{k}\right)_{k\in \N}\in l_1
\end{equation}
is an \textit{isometric isomorphism} between $(X,\|\cdot\|)$ and $l_1$ with the inverse 
\begin{equation}\label{ii2}
l_1\ni z:=(z_k)_{k\in \N}\mapsto J^{-1}z:=\left(k\sum_{j=0}^{k-1}z_j\right)_{k\in \N}\in X,
\end{equation}
where 
\[
z_0:=-\sum_{k=1}^\infty z_k.
\]

On the space $(X,\|\cdot\|)$, consider the linear operator
\[
\hat{A}_w:=J^{-1}A_wJ
\]
naturally emerging from the commutative diagram
\begin{equation*}
\begin{tikzcd}[sep=large]
l_1\arrow[r, "A_w"] & l_1 \\
X\arrow[u, "J"] \arrow[r, "\hat{A}_w"]& X\arrow[u, "J"]
\end{tikzcd}.
\end{equation*}

Since
\[
\hat{A}_w^n:=J^{-1}A_w^nJ,\ n\ \in \N,
\]
where
\[
l_1\ni z:=(z_k)_{k\in \N}\mapsto A_w^nz:=w^n(z_{k+n})_{k\in \N}\in l_1,
\]
for any $x:=(x_k)_{k\in \N}\in X$,
\begin{align*}
\hat{A}_w^nx&=J^{-1}A_w^nJx=J^{-1}A_w^n\left(\frac{x_{k+1}}{k+1}-\frac{x_k}{k}\right)_{k\in \N}=J^{-1}w^n\left(\frac{x_{k+1+n}}{k+1+n}-\frac{x_{k+n}}{k+n}\right)_{k\in \N}\\
&=w^n\left(k\sum_{j=0}^{k-1}z_j \right)_{k\in \N}
=w^n\left(\frac{k}{k+n}x_{k+n}\right)_{k\in \N}
\end{align*}
since, for
\[
z_k:=\frac{x_{k+1+n}}{k+1+n}-\frac{x_{k+n}}{k+n},\ k\in \N,
\]
\[
z_0:=-\sum_{k=1}^\infty z_k=-\sum_{k=1}^\infty\left(\frac{x_{k+1+n}}{k+1+n}-\frac{x_{k+n}}{k+n}\right)
=\frac{x_{1+n}}{1+n},
\]
and hence,
\[
\sum_{j=0}^{k-1}z_j =z_0+\sum_{j=1}^{k-1}\left(\frac{x_{j+1+n}}{j+1+n}-\frac{x_{j+n}}{j+n}\right)
=\frac{x_{1+n}}{1+n}+\frac{x_{k+n}}{k+n}-\frac{x_{1+n}}{1+n}=\frac{x_{k+n}}{k+n},\ k\in \N.
\]

Further, since $J:X\to l_1$ is an \emph{isometric isomorphism}, the operator $\hat{A}_w^n$ ($n\in \N$) inherits the \textit{boundedness} and \textit{chaotic/hypercyclic properties} of $A_w$ as well as its \textit{spectral structure}. 

Hence, $\hat{A}_w$ is \emph{chaotic} as well as its every power $\hat{A}_w^n$ ($n\in \N$) when $|w|>1$ and is \emph{nonhypercyclic} when $|w|\le 1$.

Provided the underlying space is complex, the spectral part of the theorem follows from the
fact that 
\[
\sigma\left(A_w\right)=\left\{ \lambda\in \C \,\middle|\, |\lambda|\le |w| \right\}
\]
with
\[
\sigma_p\left(A_w\right)=\left\{ \lambda\in \C \,\middle|\, |\lambda|<|w| \right\}\quad \text{and}\quad
\sigma_c\left(A_w\right)=\left\{ \lambda\in \C \,\middle|\, |\lambda|=|w| \right\}
\]
(see, e.g., \cite{Markin2020EOT}).
\end{proof}

\begin{thm}[More Unbounded Linear Chaos in $X$]\ \\
For arbitrary $w\in \F$ with $|w|>1$, the linear operator
\[
\hat{A}_wx:=\left(k\sum_{j=k}^\infty w^j\left(\frac{x_{j+1}}{j+1}-\frac{x_{j+2}}{j+2}\right)\right)_{k\in \N}
\]

in the space $(X,\|\cdot\|)$ with domain
\[
D\left(\hat{A}_w\right):=\left\{x:=(x_k)_{k\in \N}\in X\, \middle|\, \left(w^k\left(\frac{x_{k+2}}{k+2}-\frac{x_{k+1}}{k+1}\right)\right)_{k\in \N}\in l_1\right\}
\]

is unbounded and chaotic as well as its every power
\[
\hat{A}_w^nx=\left(k\sum_{j=k}^\infty\left[\prod_{m=j}^{j+n-1}w^m\right]\left(\frac{x_{j+n}}{j+n}-\frac{x_{j+1+n}}{j+1+n}\right)\right)_{k\in \N}
\]


in the space $(X,\|\cdot\|)$ with domain
\[
D\left({\hat{A}_w}^n\right)=\left\{x:=(x_k)_{k\in \N}\in X\, \middle|\, \left(\left[\prod_{j=k}^{k+n-1}w^j\right]\left(\frac{x_{k+1+n}}{k+1+n}-\frac{x_{k+n}}{k+n}\right)\right)_{k\in \N}\in l_1 \right\}.
\]

Furthermore, each $\lambda \in \F$ is an eigenvalue for $\hat{A}_w$ of geometric multiplicity $1$, i.e.,
\[
\dim\ker(\hat{A}_w-\lambda I)=1.
\]
In particular, provided the space $(X,\|\cdot\|)$ is complex,
\[
\sigma_p\left(\hat{A}_w\right)=\C.
\]
\end{thm}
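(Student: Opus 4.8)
The plan is to realize $\hat{A}_w$ as the conjugate, through the isometric isomorphism $J\colon (X,\|\cdot\|)\to l_1$ of the \emph{Isometric Isomorphisms Proposition} (Proposition \ref{IIP}), of the \emph{unbounded} weighted backward shift
\[
l_1\ni z:=(z_k)_{k\in\N}\mapsto A_wz:=\left(w^kz_{k+1}\right)_{k\in\N}\in l_1
\]
with maximal domain, exactly as the preceding bounded theorem handled the bounded shift. First I would recall that, for $|w|>1$, this $A_w$ on $l_1$ is a densely defined unbounded closed operator, chaotic along with each power $A_w^n$, with every $\lambda\in\F$ an eigenvalue of geometric multiplicity one and $\sigma_p(A_w)=\C$ in the complex case; this is the content of \cite{arXiv:1811.06640} (see also \cite{arXiv:2203.02032}) and rests on the same eigenvector computation $x_k=(\lambda/w^{k/2})^{k-1}x_1$ that already appeared in Theorem \ref{UWBS}.

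Next I would set $\hat{A}_w:=J^{-1}A_wJ$ through the commutative diagram and verify the explicit formulas. Writing $z:=Jx=\bigl(\tfrac{x_{k+1}}{k+1}-\tfrac{x_k}{k}\bigr)_{k\in\N}$, one computes $(A_w^nz)_k=\bigl[\prod_{j=k}^{k+n-1}w^j\bigr]\bigl(\tfrac{x_{k+1+n}}{k+1+n}-\tfrac{x_{k+n}}{k+n}\bigr)$, so that $x$ lies in $D(\hat{A}_w^n)=J^{-1}\bigl(D(A_w^n)\bigr)$ precisely when this sequence belongs to $l_1$, which recovers the stated domains verbatim. Applying $J^{-1}z=\bigl(k\sum_{j=0}^{k-1}z_j\bigr)_{k\in\N}$ with $z_0=-\sum_{k\ge 1}z_k$ then collapses the partial sum $\sum_{j=0}^{k-1}$ into the tail $-\sum_{j\ge k}$, producing the closed-form tail-series expression for $\hat{A}_w^nx$ asserted in the statement. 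This telescoping bookkeeping, identical in spirit to the computation in the bounded theorem, is the one genuinely computational step.

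With the conjugacy $\hat{A}_w^n=J^{-1}A_w^nJ$ established, the remaining claims transfer automatically since $J$ is a norm-preserving linear homeomorphism. Closedness of each $\hat{A}_w^n$, density of $C^\infty(\hat{A}_w)=J^{-1}\bigl(C^\infty(A_w)\bigr)$, and unboundedness (immediate from $\|\hat{A}_wx\|=\|A_wJx\|_1$) all follow at once; and because $J^{-1}$ carries orbits to orbits and periodic points to periodic points, it maps the hypercyclic vectors and the dense set of periodic points of $A_w$ onto those of $\hat{A}_w$, so $\hat{A}_w$ and every power are chaotic. For the spectral part, $\hat{A}_w-\lambda I=J^{-1}(A_w-\lambda I)J$ yields $\ker(\hat{A}_w-\lambda I)=J^{-1}\ker(A_w-\lambda I)$, whence the geometric multiplicity remains $1$ for every $\lambda\in\F$ and $\sigma_p(\hat{A}_w)=\sigma_p(A_w)=\C$ in the complex case.

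I expect the only real obstacle to be the careful treatment of domains in the unbounded conjugacy: verifying that $\hat{A}_w^n$ genuinely equals $J^{-1}A_w^nJ$ on its full maximal domain (and not merely on a core) and that $D(\hat{A}_w^n)=J^{-1}\bigl(D(A_w^n)\bigr)$, alongside the telescoping summation that delivers the explicit formulas. Once the domains are pinned down, the chaoticity and the eigenvalue structure are immediate consequences of topological conjugacy through the isometric isomorphism $J$.
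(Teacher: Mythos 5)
Your proposal is correct and follows essentially the same route as the paper: conjugating the known chaotic unbounded weighted backward shift on $l_1$ by the isometric isomorphism $J$ of Proposition \ref{IIP}, carrying out the same telescoping computation to obtain the explicit formulas for ${\hat{A}_w}^n$ and its domain, and transferring unboundedness, chaoticity, and the eigenvalue structure through the conjugacy. The paper likewise simply defines $D\bigl(\hat{A}_w\bigr):=J^{-1}\bigl(D(A_w)\bigr)$, so the domain concern you flag does not arise as a separate issue.
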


\begin{proof}
Let $w\in \F$ with $|w|>1$ be arbitrary.

As is known, the unbounded weighted backward shift
\[
D(A_w)\ni z:=(z_k)_{k\in \N}\mapsto A_wz:=\left(w^kz_{k+1}\right)_{k\in \N}\in l_1
\]
in $l_1$ with domain
\[
D(A_w):=\left\{z:=(z_k)_{k\in \N}\in l_1\,\middle |\, \left(w^kz_{k+1}\right)_{k\in \N}\in l_1\right\}.
\]
is a \emph{chaotic} linear operator \cite[Theorem $3.1$]{arXiv:1811.06640} along with every power $A_w^n$ ($n\in \N$) \cite[Examples $3.1$]{arXiv:2106.14872}.

Based the \emph{Isometric Isomorphisms Proposition} (Proposition \ref{IIP}), for the \textit{isometric isomorphism} $J:X\to l_1$ subject to \eqref{ii1} and \eqref{ii2}, in the space $(X,\|\cdot\|)$, consider the linear operator
\[
\hat{A}_w:=J^{-1}A_wJ
\]
as is given by the following commutative diagram:
\begin{equation*}
\begin{tikzcd}[sep=large]
l_1\supseteq D(A_w)\arrow[r, "A_w"] & l_1 \\
X\supseteq D\left(\hat{A}_w\right)\arrow[u, "J"] \arrow[r, "\hat{A}_w"]& X\arrow[u, "J"]
\end{tikzcd},
\end{equation*}
where the domain of $\hat{A}_w$ is
\[
D\left(\hat{A}_w\right):=J^{-1}\left(D(A_w)\right).
\]

Since
\[
{\hat{A}_w}^n=J^{-1}A_w^nJ,\ n\in \N,
\]

where
\[
A_w^nz=\left(\left[\prod_{j=k}^{k+n-1}w^j\right]z_{k+n}\right)_{k\in \N}
\]
with
\[
D\left(A_w^n\right)=\left\{z:=(z_k)_{k\in \N}\in l_1\, \middle|\, \left(\left[\prod_{j=k}^{k+n-1}w^j\right]z_{k+n}\right)_{k\in \N}\in l_1\right\}
\]
(cf. \cite{arXiv:1811.06640,arXiv:2203.02032}), in view of \eqref{ii1},
\[
D\left({\hat{A}_w}^n\right)=\left\{x:=(x_k)_{k\in \N}\in X\, \middle|\, \left(\left[\prod_{j=k}^{k+n-1}w^j\right]\left(\frac{x_{k+1+n}}{k+1+n}-\frac{x_{k+n}}{k+n}\right)\right)_{k\in \N}\in l_1 \right\}.
\]

Moreover, for arbitrary $x:=(x_k)_{k\in \N}\in X$, in view of \eqref{ii1} and \eqref{ii2},
\begin{align*}
{\hat{A}_w}^nx&=J^{-1}A_w^nJx=J^{-1}A_w^n\left(\frac{x_{k+1}}{k+1}-\frac{x_k}{k}\right)_{k\in \N}\\
&=J^{-1} \left(\left[\prod_{j=k}^{k+n-1}w^j\right]\left(\frac{x_{k+1+n}}{k+1+n}-\frac{x_{k+n}}{k+n}\right)\right)_{k\in \N}\\
&=\left(k\left[z_0+\sum_{j=1}^{k-1}\left[\prod_{m=j}^{j+n-1}w^m\right]\left(\frac{x_{j+1+n}}{j+1+n}-\frac{x_{j+n}}{j+n}\right)\right]\right)_{k\in \N}\\
&=\left(-k\sum_{j=k}^{\infty}\left[\prod_{m=j}^{j+n-1}w^m\right]\left(\frac{x_{j+1+n}}{j+1+n}-\frac{x_{j+n}}{j+n}\right)\right)_{k\in \N}\\
&=\left(k\sum_{j=k}^{\infty}\left[\prod_{m=j}^{j+n-1}w^m\right]\left(\frac{x_{j+n}}{j+n}-\frac{x_{j+1+n}}{j+1+n}\right)\right)_{k\in \N}
\end{align*}

where $z_0:=-\displaystyle\sum_{k=1}^\infty\left[\prod_{j=k}^{k+n-1}w^j\right]\left(\frac{x_{k+1+n}}{k+1+n}-\frac{x_{k+n}}{k+n}\right)$.

Further, since $J:X\to l_1$ is an \emph{isometric isomorphism}, the operator ${\hat{A}_w}^n$ ($n\in \N$) inherits the \emph{unboundedness} and \emph{chaoticity} of $A_w$ as well as its \emph{eigenvalues} coupled with their \emph{geometric multiplicities}. 

Therefore, the operator $\hat{A}_w$ is \emph{unbounded} and  \emph{chaotic} as well as its every power ${\hat{A}_w}^n$ ($n\in \N$).

Furthermore, in view of,
\[
\forall\, \lambda\in \F:\ \dim\ker(A_w-\lambda I)=1
\]
(see \cite[Theorem $3.1$]{arXiv:1811.06640}),  each $\lambda\in \F$ is a \emph{simple eigenvalue} for $\hat{A}_w$.
\end{proof}

\begin{thm}[More Bounded Linear Chaos in $l_1$]\ \\
For $w\in \F$, the bounded linear operator
\[
l_1\ni z:=(z_k)_{k\in \N}\mapsto \hat{A}_wz:=w\left(\frac{k+2}{k+1}z_{k+1}-\frac{1}{k(k+1)}\sum_{j=0}^{k}z_j\right)_{k\in \N}\in l_1 
\]
with 
\[
z_0:=-\sum_{k=1}^\infty z_k,
\]
on the space $l_1$ is 
\begin{enumerate}[label=\arabic*.]
\item non-hypercyclic for $|w|<1$,
\item hypercyclic but not chaotic for $|w|=1$, and
\item chaotic as well as its every power
\[
l_1\ni z:=(z_k)_{k\in \N}\mapsto {\hat{A}_w}^n=w^n\left(\frac{k+1+n}{k+1}z_{k+n}-\frac{n}{k(k+1)}\sum_{j=0}^{k-1+n}z_j\right)_{k\in \N}\in l_1
\]
with 
\[
z_0:=-\sum_{k=1}^\infty z_k,
\]
for $|w|>1$.
\end{enumerate}

Provided the space $l_1$ is complex (i.e., $\F=\C$),
\[
\sigma\left(\hat{A}_w\right)=\left\{ \lambda\in \C \,\middle|\, |\lambda|\le |w| \right\}
\]
with
\[
\sigma_p(\hat{A}_w)=\left\{ \lambda\in \C \,\middle|\, |\lambda|< |w| \right\}\cup \left\{w\right\}
\quad
\text{and}
\quad
\sigma_c(\hat{A}_w)=\left\{ \lambda\in \C \,\middle|\, |\lambda|=|w| \right\}\setminus \left\{w\right\}.
\]
\end{thm}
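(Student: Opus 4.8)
The plan is to recognize this theorem as the exact counterpart, read through the isometric isomorphism $J:X\to l_1$ of the \emph{Isometric Isomorphisms Proposition} (Proposition \ref{IIP}), of the \emph{Bounded Weighted Backward Shifts} theorem (Theorem \ref{BWBS}): the listed hypercyclicity, chaoticity, and spectral assertions for $\hat{A}_w$ on $l_1$ are \emph{verbatim} those established there for the weighted backward shift $A_w$ on $(X,\|\cdot\|)$. Whereas the two preceding theorems transported operators \emph{from} $l_1$ \emph{to} $X$ via $J^{-1}A_wJ$, here I would run the conjugation in the opposite direction, transporting $A_w$ acting on $(X,\|\cdot\|)$ over to $l_1$ by setting $\hat{A}_w:=JA_wJ^{-1}$, as encoded in the commutative diagram with vertical arrows $J^{-1}:l_1\to X$.

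First I would derive the explicit form of $\hat{A}_w^n$. Substituting $x:=J^{-1}z$, so that $x_k=k\sum_{j=0}^{k-1}z_j$ with $z_0:=-\sum_{k=1}^\infty z_k$, applying $A_w^nx=w^n(x_{k+n})_{k\in\N}$, and then applying $J$, one obtains
\[
\left(\hat{A}_w^nz\right)_k=\frac{(A_w^nx)_{k+1}}{k+1}-\frac{(A_w^nx)_k}{k}=w^n\left(\frac{k+1+n}{k+1}\,S_{k+n}-\frac{k+n}{k}\,S_{k+n-1}\right),
\]
where $S_m:=\sum_{j=0}^{m}z_j$. Writing $S_{k+n}=S_{k+n-1}+z_{k+n}$ and using the telescoping identity
\[
\frac{k+1+n}{k+1}-\frac{k+n}{k}=\frac{k(k+1+n)-(k+1)(k+n)}{k(k+1)}=-\frac{n}{k(k+1)}
\]
collapses the $S_{k+n-1}$ terms and reproduces precisely the stated formula for $\hat{A}_w^n$; the case $n=1$ uses the analogous identity $\frac{k+2}{k+1}-\frac{k+1}{k}=-\frac{1}{k(k+1)}$ and recovers the defining formula for $\hat{A}_w$.

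With the formula in hand, every conclusion follows automatically from Theorem \ref{BWBS}. Since $J$ is an isometric isomorphism and $\hat{A}_w^n=JA_w^nJ^{-1}$, the operator $\hat{A}_w$ inherits boundedness and all operator norms from $A_w$ on $X$; because $J$ is in particular a homeomorphism intertwining the iterates, $\orb(Jx,\hat{A}_w)=J\left(\orb(x,A_w)\right)$ and $\Per(\hat{A}_w)=J\left(\Per(A_w)\right)$, so hypercyclic vectors and periodic points transfer bijectively with density preserved. This yields the non-hypercyclicity for $|w|<1$, the hypercyclicity without chaoticity for $|w|=1$ (the periodic points still sit inside the nowhere-dense image of $\spa\{(1,1,1,\dots)\}$), and the chaoticity of $\hat{A}_w$ together with all its powers for $|w|>1$. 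Finally, from $\hat{A}_w-\lambda I=J(A_w-\lambda I)J^{-1}$ one reads off that $\hat{A}_w-\lambda I$ is injective, surjective, or of dense range exactly when $A_w-\lambda I$ is, so the point, continuous, and residual spectra coincide, giving the asserted $\sigma(\hat{A}_w)$, $\sigma_p(\hat{A}_w)$, and $\sigma_c(\hat{A}_w)$.

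The only genuine computation is the algebraic simplification of the power formula; the dynamical and spectral assertions are purely formal consequences of isometric conjugacy and need no new argument beyond Theorem \ref{BWBS}. I therefore expect the sole mildly delicate point to be the bookkeeping in telescoping the $S_{k+n-1}$ coefficient together with the careful handling of the defining relation $z_0=-\sum_{k=1}^\infty z_k$, exactly as in the two preceding theorems.
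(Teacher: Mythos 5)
Your proposal is correct and follows essentially the same route as the paper: defining $\hat{A}_w:=JA_wJ^{-1}$ via the isometric isomorphism of Proposition \ref{IIP}, deriving the explicit formula for $\hat{A}_w^n$ by the same substitution and telescoping identity $\frac{k+1+n}{k+1}-\frac{k+n}{k}=-\frac{n}{k(k+1)}$, and then transferring boundedness, the hypercyclic/chaotic trichotomy, and the spectral structure from Theorem \ref{BWBS} by isometric conjugacy. No discrepancies to report.
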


\begin{proof}
For $w\in \F$, let
\[
X\ni x:=(x_k)_{k\in \N}\mapsto A_wx:=w(x_{k+1})_{k\in \N}\in X.
\]

By the \emph{Bounded Weighted Backward Shifts Theorem} (Theorem \ref{BWBS}), $A_w$ is a \emph{bounded} linear operator that is
\begin{enumerate}
\item[(i)] \emph{nonhypercyclic} for $|w|<1$,
\item[(ii)] \emph{hypercyclic} but \emph{not chaotic} for $|w|=1$, and
\item[(iii)] \emph{chaotic} along with every power $A_w^n$, $n\in \N$, for $|w|>1$.
\end{enumerate}

By the \emph{Isometric Isomorphisms Proposition} (Proposition \ref{IIP}), in view of $\eqref{ii1}$ and $\eqref{ii2}$, on the space $l_1$, consider the linear operator
\[
\hat{A}_w:=JA_wJ^{-1}.
\]

i.e., the following diagram commutes.
\begin{equation*}
\begin{tikzcd}[sep=large]
l_1\arrow[r, "\hat{A}_w"] & l_1 \\
X\arrow[u, "J"] \arrow[r, "A_w"]& X\arrow[u, "J"]
\end{tikzcd}.
\end{equation*}

Since 
\[
{\hat{A}_w}^n=JA_w^nJ^{-1},\ n\in \N,
\]
where
\[
X\ni x:=(x_k)_{k\in \N}\mapsto A_w^nx=w^n\left(x_{k+n}\right)_{k\in \N}\in X,
\]

for any $z:=(z_k)_{k\in \N}\in l_1$, in view of \eqref{ii1} and \eqref{ii2}, the following holds:
\begin{align*}
{\hat{A}_w}^nz&=JA_w^nJ^{-1}z=JA_w^n\left(k\sum_{j=0}^{k-1}z_j\right)_{k\in \N}=Jw^n\left((k+n)\sum_{j=0}^{k-1+n}z_j\right)_{k\in \N}\\
&=w^n\left(\frac{k+1+n}{k+1}\sum_{j=0}^{k+n}z_j-\frac{k+n}{k}\sum_{j=0}^{k-1+n}z_j\right)_{k\in \N}\\
&=w^n\left(\frac{k+1+n}{k+1}z_{k+n}+\frac{k+1+n}{k+1}\sum_{j=0}^{k-1+n}z_j-\frac{k+n}{k}\sum_{j=0}^{k-1+n}z_j\right)_{k\in \N}\\
&=w^n\left(\frac{k+1+n}{k+1}z_{k+n}-\frac{n}{k(k+1)}\sum_{j=0}^{k-1+n}z_j\right)_{k\in \N}
\end{align*}
where 
\[
z_0:=-\sum_{k=1}^\infty z_k.
\]

Furthermore, since $J$ is an \emph{isometric isomorphism}, $\hat{A}_w$ inherits the \emph{boundedness} and \emph{chaotic/hypercyclic properties} of $A_w$ as well as its \emph{spectral structure}.

Hence, $\hat{A}_w$ is a bounded linear operator that is
\begin{enumerate}[label=\arabic*.]
\item \emph{nonhypercyclic} for $|w|<1$,
\item \emph{hypercyclic} but \emph{not chaotic} for $|w|=1$, and
\item \emph{chaotic} along with every power $A_w^n$, $n\in \N$, for $|w|>1$.
\end{enumerate}

Provided the underlying space is complex, the spectral part of this theorem follows from the \emph{Bounded Weighted Backward Shifts Theorem} (Theorem \ref{BWBS}) where
\[
\sigma(A_w)=\left\{ \lambda\in \C \,\middle|\, |\lambda|\le |w| \right\}
\]
with
\[
\sigma_p(A_w)=\left\{ \lambda\in \C \,\middle|\, |\lambda|< |w| \right\}\cup \left\{w\right\}
\quad
\text{and}
\quad
\sigma_c(A_w)=\left\{ \lambda\in \C \,\middle|\, |\lambda|=|w| \right\}\setminus \left\{w\right\}.
\]
\end{proof}

\begin{thm}[More Unbounded Linear Chaos in $l_1$]\ \\
For an arbitrary $w\in \F$ with $|w|>1$, the linear operator
\[
\hat{A}_wz:=\left(w^{k+1}\frac{k+2}{k+1}\sum_{j=0}^{k+1}z_j-w^k\frac{k+1}{k}\sum_{j=0}^{k}z_j\right)_{k\in \N}
\]
with
\[
z_0:=-\sum_{k=1}^\infty z_k,
\] 
and domain
\[
D\left(\hat{A}_w\right):=\left\{z:=(z_k)_{k\in \N}\in l_1\, \middle|\, \left(w^k(k+1)\sum_{j=0}^{k}z_j\right)_{k\in \N}\in X \right\}
\]
is unbounded and chaotic as well as its every power
\[
{\hat{A}_w}^nz=\left(\left[\displaystyle\prod_{m=k}^{k-1+n}w^m\right]\left(w^n\frac{k+1+n}{k+1}\sum_{j=0}^{k+n}z_j-\frac{k+n}{k}\sum_{j=0}^{k-1+n}z_j\right)\right)_{k\in \N}, \ n\in \N,
\]
with 
\[
z_0:=-\sum_{k=1}^\infty z_k,
\] 
and domain
\[
D\left({\hat{A}_w}^n\right)=\left\{z:=(z_k)_{k\in \N}\in l_1\, \middle|\, \left(\left[\prod_{m=k}^{k+n-1}w^m\right](k+n)\sum_{j=0}^{k-1+n}z_j\right)_{k\in \N}\in X \right\}.
\]

Furthermore, each $\lambda \in \F$ is an eigenvalue for $\hat{A}_w$ of geometric multiplicity $1$, i.e.,
\[
\dim\ker(\hat{A}_w-\lambda I)=1.
\]
In particular, provided the space $(X,\|\cdot\|)$ is complex,
\[
\sigma_p\left(\hat{A}_w\right)=\C.
\]
\end{thm}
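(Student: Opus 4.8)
The plan is to follow verbatim the conjugation scheme of the three preceding theorems of this section, now transporting the \emph{unbounded} chaotic shift from $(X,\|\cdot\|)$ to $l_1$ (rather than from $l_1$ to $X$). The starting point is the \emph{Unbounded Weighted Backward Shifts Theorem} (Theorem \ref{UWBS}): for $|w|>1$ the operator
\[
A_wx:=\left(w^kx_{k+1}\right)_{k\in \N}
\quad\text{with}\quad
A_w^nx=\left(\left[\prod_{j=k}^{k+n-1}w^j\right]x_{k+n}\right)_{k\in \N}
\]
in $(X,\|\cdot\|)$ is unbounded and chaotic together with every power, each $\lambda\in\F$ being an eigenvalue of geometric multiplicity $1$. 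First I would invoke the \emph{Isometric Isomorphisms Proposition} (Proposition \ref{IIP}) to fix the isomorphism $J:X\to l_1$ subject to \eqref{ii1} and \eqref{ii2}, and then define the conjugate $\hat{A}_w:=JA_wJ^{-1}$ on $l_1$ through the evident commutative diagram, with $D(\hat{A}_w):=J\big(D(A_w)\big)$.

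The one genuine computation is to derive the explicit formulas for $\hat{A}_w^n=JA_w^nJ^{-1}$ and its domain. I would evaluate $\hat{A}_w^nz$ in three passes: applying $J^{-1}$ turns the $k$-th entry into $x_k=k\sum_{j=0}^{k-1}z_j$ with the convention $z_0:=-\sum_{k=1}^\infty z_k$; applying $A_w^n$ produces $\left[\prod_{m=k}^{k+n-1}w^m\right](k+n)\sum_{j=0}^{k-1+n}z_j$; and applying $J$ takes consecutive differences. Using the elementary identity $\prod_{m=k+1}^{k+n}w^m=w^n\prod_{m=k}^{k+n-1}w^m$ to factor out the common product, the difference collapses to the stated compact form, exactly as the recombination of partial sums in the \emph{More Bounded Linear Chaos in $l_1$} theorem. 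The domain condition $\left(w^k(k+1)\sum_{j=0}^{k}z_j\right)_{k\in \N}\in X$ is merely the transcription of $A_wJ^{-1}z\in X$, and its $n$-th power analogue is read off identically.

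Once these formulas are established, every conclusion is immediate from conjugacy. Since $J$ is an isometric isomorphism, $\hat{A}_w^n=JA_w^nJ^{-1}$ inherits verbatim the unboundedness, the chaoticity, and the chaoticity of all powers of $A_w^n$ on $X$; if one preferred an intrinsic argument, it would suffice to reapply the \emph{Sufficient Condition for Linear Chaos} (Theorem \ref{SCC}) to the transported dense set $J(c_{00})$ and right inverse $JB_wJ^{-1}$, but conjugacy renders this superfluous. For the spectral claim, from $\hat{A}_w-\lambda I=J(A_w-\lambda I)J^{-1}$ one gets $\ker(\hat{A}_w-\lambda I)=J\big(\ker(A_w-\lambda I)\big)$, so each $\lambda\in\F$ remains a simple eigenvalue and $\sigma_p(\hat{A}_w)=\C$ in the complex case.

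The main obstacle is purely bookkeeping: threading the convention $z_0:=-\sum_{k=1}^\infty z_k$ correctly through both $J^{-1}$ and $J$ and telescoping the two nested partial sums so that the expression reduces to the form in the statement. There is no analytic difficulty, since all the substance --- the chaos, the full point spectrum, and the unit geometric multiplicities --- has already been secured for $A_w$ on $X$ in Theorem \ref{UWBS}, and conjugation by an isometric isomorphism preserves precisely the properties being asserted.
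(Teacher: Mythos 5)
Your proposal is correct and follows essentially the same route as the paper: conjugating the unbounded chaotic shift $A_w$ of Theorem \ref{UWBS} by the isometric isomorphism $J:X\to l_1$ of Proposition \ref{IIP}, computing ${\hat{A}_w}^n=JA_w^nJ^{-1}$ and its domain explicitly, and transferring unboundedness, chaoticity of all powers, and the simple eigenvalues by conjugacy. The paper's proof is exactly this computation and inheritance argument, so no further comment is needed.
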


\begin{proof}
For arbitrary $w\in \F$ with $|w|>1$, let
\[
D(A_w)\ni x:=(x_k)_{k\in \N}\mapsto A_wx:=\left(w^kx_{k+1}\right)_{k\in \N}\in X
\]

where
\[
D(A_w):=\left\{x:=(x_k)_{k\in \N}\in X\,\middle |\, \left(w^kx_{k+1}\right)_{k\in \N}\in X\right\}.
\]

By Lemma \ref{lem} and the \emph{Unbounded Weighted Backward Shifts Theorem} (Theorem \ref{UWBS}), $A_w$ is \emph{unbounded} and \emph{chaotic} along with every power $A_w^n$ ($n\in \N$).

By the \emph{Isometric Isomorphisms Proposition} (Proposition \ref{IIP}), in view of \eqref{ii1} and \eqref{ii2}, in the space $l_1$, consider the following linear operator
\[
\hat{A}_w:=JA_wJ^{-1}
\]

that emerges from the commutative diagram below

\begin{equation*}
\begin{tikzcd}[sep=large]
l_1\supseteq D\left(\hat{A}_w\right)\arrow[r, "\hat{A}_w"] & l_1 \\
X\supseteq D(A_w)\arrow[u, "J"] \arrow[r, "A_w"]& X\arrow[u, "J"]
\end{tikzcd}.
\end{equation*}

where the domain of $\hat{A}_w$ is
\[
D\left(\hat{A}_w\right):=J\left(D(A_w)\right).
\]

Since
\[
{\hat{A}_w}^n=JA_w^nJ^{-1},\ n\in \N,
\]
where, by Lemma \ref{lem},
\[
A_w^{n}x= \left( \left[ \prod_{j=k}^{k+n-1} w^j \right]x_{k+n} \right)_{k \in \N},
\]
with domain
\[
D(A_w^{n})= \left\{ x := \left(x_k \right)_{k \in \N} \in X \,\middle|\, \left( \left[ \prod_{j=k}^{k+n-1} w^j \right]x_{k+n} \right)_{k \in \N} \in X \right\},
\]
in view of \eqref{ii2},
\[
D\left({\hat{A}_w}^n\right)=\left\{z:=(z_k)_{k\in \N}\in l_1\, \middle|\, \left(\left[\prod_{m=k}^{k+n-1}w^m\right](k+n)\sum_{j=0}^{k-1+n}z_j\right)_{k\in \N}\in X \right\}
\]
with 
\[
z_0:=-\sum_{k=1}^\infty z_k.
\] 

Moreover, for arbitrary $z:=(z_k)_{k\in \N}\in l_1$, in view of \eqref{ii1} and \eqref{ii2},
\begin{align*}
{\hat{A}_w}^nz&=JA_w^nJ^{-1}z=JA_w^n\left(k\sum_{j=0}^{k-1}z_j\right)_{k\in \N}=J\left(\left[\prod_{m=k}^{k+n-1}w^m\right](k+n)\sum_{j=0}^{k-1+n}z_j\right)_{k\in \N}\\
&=\left(\frac{(k+1+n)\left[\displaystyle\prod_{m=k+1}^{k+n}w^m\right]\displaystyle\sum_{j=0}^{k+n}z_j}{k+1}-\frac{(k+n)\left[\displaystyle\prod_{m=k}^{k-1+n}w^m\right]\displaystyle\sum_{j=0}^{k-1+n}z_j}{k}\right)_{k\in \N}\\
&=\left(\left[\displaystyle\prod_{m=k}^{k-1+n}w^m\right]\left(w^n\frac{k+n+1}{k+1}\sum_{j=0}^{k+n}z_j-\frac{k+n}{k}\sum_{j=0}^{k-1+n}z_j\right)\right)_{k\in \N}
\end{align*}
where 
\[
z_0:=-\sum_{k=1}^\infty z_k.
\]  

Further, since $J:X\to l_1$ is an \emph{isometric isomorphism}, the operator ${\hat{A}_w}^n$ ($n\in \N$) inherits the \emph{boundedness} and \emph{chaoticity} of $A_w$ as well as its \emph{eigenvalues} coupled with their \emph{geometric multiplicities}. 

Therefore, $\hat{A}_w$ is \emph{unbounded} and  \emph{chaotic} as well as its every power ${\hat{A}_w}^n$ ($n\in \N$).

Furthermore, by the \emph{Unbounded Weighted Backward Shifts Theorem} (Theorem \ref{UWBS}), every $\lambda\in \F$ is an \emph{simple eigenvalue} for $A_w$.
\end{proof}

 
\end{document}